\newcommand{\Addresses}{{% additional braces for segregating \footnotesize
		\bigskip
		\footnotesize
		
		Goethe Universit\"at Frankfurt am Main, Institut f\"ur Mathematik, Robert-Mayer Strasse 6-8
		\hfill \newline\texttt{}
		\indent 60325 Frankfurt am Main, Germany} 
	\par\nopagebreak
	\textit{E-mail address}: \texttt{andreibud95@protonmail.com}
}
\theoremstyle{plain}
\newtheorem{trm}{Theorem}[section]
\newtheorem{lm}[trm]{Lemma}
\newtheorem{prop}[trm]{Proposition}
\newtheorem{cor}[trm]{Corollary}
\theoremstyle{definition}
\def\OO{\mathcal{O}}
\def\cM{\mathcal{M}}
\def\cR{\mathcal{R}}
\def\rr{\overline{\mathcal{R}}}
\def\Pic0{{\rm Pic}^0(X)}
\def\mm{\overline{\mathcal{M}}}
\begin{document}
\title{Brill-Noether loci and strata of differentials}
\author{Andrei Bud}
\date{}
\maketitle
\begin{abstract}
	We prove that the projectivized strata of differentials are not contained in pointed Brill-Noether divisors, with only a few exceptions. For a generic element in a stratum of differentials, we show that many of the associated pointed Brill-Noether loci are of expected dimension. We use our results to study the Auel-Haburcak Conjecture: We obtain new non-containments between maximal Brill-Noether loci in $\mathcal{M}_g$. Our results regarding quadratic differentials imply that the quadratic strata in genus $6$ are uniruled.     
\end{abstract}
 
\section{Introduction}
 
 Starting with the work of Kontsevich and Zorich, see \cite{KontsevichZorich}, and of Polischuk, see \cite{polischuckr-spin}, the study of strata of $k$-differentials became an important research topic in Algebraic Geometry. Following these developments, many results about the compactification of strata (c.f. \cite{FP18}, \cite{DaweiAbComp}, \cite{Daweik-diffcomp} and \cite{Daweimulti-scale}), about the number of connected components of strata (c.f. \cite{KontsevichZorich}, \cite{Boissy}, \cite{Lanneau}, \cite{ChenM} and \cite{chengendron}), and about its class in the tautological ring of $\overline{\mathcal{M}}_{g,n}$ (c.f. \cite{Pandha-tautological-Picard}) have been obtained. An important aspect in the study of strata of differentials is understanding their birational geometry. 

%Based on the many applications of Brill-Noether Theory to understanding the birational geometry of moduli spaces of curves, it comes natural to study the Brill-Noether loci 
% \[ W^r_d(C) \coloneqq \left\{ L\in \textrm{Pic}^d(C) \ | \ h^0(C,L) \geq r+1 \right\}\]
%where $C$ is the underlying curve of a generic element in some stratum of differentials.  
Our goal in this paper is to understand the geometry of Brill-Noether loci for the generic curve appearing in a stratum of differentials. For $\mu = (m_1,\ldots, m_n)$ a partition of $k\cdot (2g-2)$, we define the projectivized strata of $k$-differentials 
\[\mathcal{H}_g^k(\mu) \coloneqq \left\{ [C,p_1,\ldots, p_n] \in \mathcal{M}_{g,n} \ | \ \mathcal{O}_C(\sum_{i=1}^n m_ip_i) \cong \omega_C^k \right\}, \]
where we drop the superscript when $k = 1$. We will reduce our problem to the case $n=1$ and study pointed Brill-Noether loci, defined as
\[ W^r_{d,a}(C,p) \coloneqq \left\{ L\in \textrm{Pic}^d(C) \ | \ h^0(C,L(-a_i\cdot p)) \geq r+1-i \ \forall \ 0\leq i \leq r \right\},\] 
where $a$ is a sequence $0\leq a_0 < a_1 < \cdots < a_r \leq d$. 

This study is motivated by the many applications of Brill-Noether Theory to understanding the birational geometry of moduli spaces of curves. When $C$ is generic in $\mathcal{M}_g$, understanding these loci has led to several important results about the birational geometry of this moduli space. Brill-Noether loci were instrumental in proving that $\mathcal{M}_g$ is unirational when $g \leq 14$ (see \cite{SeveriKod}, \cite{Sernesi81}, \cite{ChanRan84} and \cite{VerraUnirational}), uniruled and rationally connected for $g= 15$ (see \cite{SchreyerM15} and \cite{BrunoVerra}), and of general type for $g \geq 22$ (see \cite{KodMg}, \cite{KodevenHarris1984}, \cite{EisenbudHarrisg>23} and \cite{FarPayneJensen}). Moreover, Brill-Noether loci encode a lot of information about the geometry of the curve: both Green's Conjecture and the Gonality Conjecture can be deduced from Brill-Noether theoretic conditions, see \cite{aproduGreen-gonality}.

Moreover, studying pointed Brill-Noether loci for the underlying curve of a generic element in $\mathcal{H}_g^k(\mu)$ has already led to several results about the birational geometry of the projectivized strata of differentials. When $k = 1$, $g\leq 11$ and $\mu$ has only positive entries, many of the strata are uniruled or unirational, see \cite{BAR18} and \cite{Budstrata}. Similarly, when $k = 2, g \leq 6$ and $\mu$ has only positive entries, many of the strata are uniruled, see \cite{BAR18} and \cite{Budstrata}. We also know that many strata are of general type for $k= 1, g\geq 12$, see \cite{DaweiKodstrata}. 

%In this paper, we will be interested in understanding the interaction between strata of differentials and Brill-Noether divisors. In particular, we will prove that strata of differentials are not contained in many pointed Brill-Noether divisors. This will conclude that, for $C$ the underlying curve of an element in some stratum

%In  recent years, there has been significant progress in understanding Brill-Noether loci for special curves, with many important results about the behaviour of curves with regards to the gonality stratification in $\mathcal{M}_g$. This study is multifaceted, with many results coming from both Moduli Theory, as well as from the perspective of Tropical Geometry. In particular, the number of components, the dimension, irreducibility and the smooth locus of Brill-Noether loci are known for a generic curve of gonality $k$ (see \cite{Pflueger-k-BN}, \cite{Larson2Voigt}, \cite{Larson-initial}, \cite{jensen-ranganathan}, \cite{cook-p-jensen}, \cite{JensenPaynesurvey} and the references therein). 

\paragraph{\textbf{Intersection of Brill-Noether divisors with projectivized strata of differentials}}
In this paper, we show that projectivized strata of differentials are not contained in many of the possible Brill-Noether divisors, and that most Brill-Noether loci are of expected dimension. Because we have a clutching morphism 
\[ \mathcal{H}^k_g\text{\large(}k\cdot (2g-2)\text{\large)} \rightarrow \overline{\mathcal{H}}_g^k(\mu), \]
it is sufficient to study the pointed Brill-Noether loci of a generic element in $\mathcal{H}_g^k\text{\large(}k\cdot (2g-2)\text{\large)}$. A specialization argument will imply that $\mathcal{H}_g^k(\mu)$ is not contained in many of the pointed Brill-Noether divisors in $\mathcal{M}_{g,n}$. 

We will treat in this paper the cases $k=1$ and $k=2$ of Abelian and quadratic differentials, respectively. When $k=1$, $g\geq 4$ and $\mu$ is positive with all entries even, the stratum $\mathcal{H}_g(\mu)$ has two nonhyperelliptic components depending on the parity of the spin structure. There are some slight differences in the behaviour of strata of differentials with respect to Brill-Noether loci depending on the spin parity. These differences are noted in Theorem \ref{mainodd} and Theorem \ref{maineven}. The problem of understanding the irreducible components of strata of differentials received considerable attention in recent years, in particular for the cases $k=1$ and $k = 2$, see \cite{KontsevichZorich}, \cite{Boissy}, \cite{Lanneau}, \cite{ChenM} and \cite{chengendron}.

For the component   
\[\mathcal{H}_g^{\textrm{odd}}(2g-2) \coloneqq \left\{ [C,p] \in \mathcal{M}_{g,1} \ | \ \mathcal{O}_C\text{\large(}(2g-2)p\text{\large )} \cong \omega_C \ \textrm{and} \ h^0\text{\Large(}C,\mathcal{O}_C\text{\large(}(g-1)p\text{\large )}\text{\Large)} \ \textrm{is odd} \right\}, \]
we have the following result:

\begin{trm} \label{mainodd}
	Let $r\geq 1$, $g\geq r+2$ be natural numbers and $ a = (0\leq a_0<a_1<\cdots a_r \leq g+r)$ a vanishing sequence satisfying 
	\[ \rho(g,r,g+r, a)\coloneqq g - \sum_{i=0}^{r}(a_i-i) \leq -1\]
	and $a_r\leq g$. Then, if $[C,p]$ is generic in the stratum $\mathcal{H}_g^{\textrm{odd}}(2g-2)$, it does not admit a $g^r_{g+r}$ with prescribed vanishing $a$ at $p$. 
\end{trm}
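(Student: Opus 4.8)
\emph{Reformulation.} The plan is to first translate the non-existence of the prescribed $g^r_{g+r}$ into a statement about a residual line bundle of small degree, where the subcanonical hypothesis is easier to exploit. Fix $[C,p]$ with $\mathcal{O}_C((2g-2)p)\cong\omega_C$ and let $L\in\mathrm{Pic}^{g+r}(C)$. Since $\deg L(-a_ip)=g+r-a_i$, Riemann--Roch gives $h^0(L(-a_ip))-h^1(L(-a_ip))=r+1-a_i$, so the condition $h^0(L(-a_ip))\ge r+1-i$ is equivalent to $h^1(L(-a_ip))\ge a_i-i$. By Serre duality $h^1(L(-a_ip))=h^0(\omega_C\otimes L^{-1}(a_ip))$, hence, writing $M=\omega_C\otimes L^{-1}$ of degree $\deg M=g-2-r\ge 0$ (here the hypothesis $g\ge r+2$ enters), the existence of the desired linear series is equivalent to the existence of $M\in\mathrm{Pic}^{g-2-r}(C)$ with $h^0(M(a_ip))\ge a_i-i$ for all $0\le i\le r$. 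This residual form is advantageous: $M$ has small non-negative degree, and the hypothesis $a_r\le g$ keeps every bundle $M(a_ip)$ of degree at most $2g-2-r<2g-2$, hence genuinely special.

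\emph{Reduction to one curve.} The relative locus $W^r_{g+r,a}$ is proper over $\mathcal{H}^{\textrm{odd}}_g(2g-2)$, so the set of $[C,p]$ for which it is non-empty is closed. By the semicontinuity (specialization) of linear series it therefore suffices to produce a single pointed curve in the closure $\overline{\mathcal{H}}^{\textrm{odd}}_g(2g-2)$ carrying no such $M$, and then deduce the statement for the generic smooth element. I would choose this curve in the boundary, as a pointed nodal curve of compact type for which (i) the relation $(2g-2)p\sim\omega$ persists as a twisted/multi-scale canonical divisor, (ii) the spin parity is odd, and (iii) limit linear series are computable; a convenient model places $p$ on a low-genus tail glued at a general point to a general curve of complementary genus, so that the prescribed vanishing is concentrated on the tail while the remaining aspect is Brill--Noether general.

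\emph{Limit linear series count.} On such a curve I would apply the Eisenbud--Harris theory of limit linear series. A limit $g^r_{g+r}$ with vanishing sequence dominating $a$ at $p$ restricts to compatible aspects on the components, and the adjusted Brill--Noether numbers are subadditive: $\rho(g,r,g+r,a)$ is at least the sum of the local contributions of the components, where the vanishing $a$ at $p$ is charged to the aspect on the tail. On the general component the local contribution is non-negative by the classical pointed Brill--Noether theorem, while on the tail carrying $p$ the subcanonical structure together with the bound $a_r\le g$ forces its contribution to be non-negative as well. Summing gives $\rho(g,r,g+r,a)\ge 0$, contradicting the hypothesis $\rho\le -1$; equivalently, in the residual picture no $M$ with $h^0(M(a_ip))\ge a_i-i$ survives to the limit.

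\emph{Main obstacle.} The difficulty is exactly that $p$ is a subcanonical, hence highly special (Weierstrass-type), point, so the pointed Brill--Noether theorem cannot be invoked directly at $p$; the heart of the argument is the analysis of the limit aspect on the component containing $p$. Here the hypothesis $a_r\le g$ is decisive, as it confines the residual bundle to degree $<2g-2$ and excludes the sporadic series responsible for the exceptional cases mentioned in the introduction. Equally delicate is the bookkeeping of the odd spin parity: one must ensure the chosen degeneration lies in the odd component of the compactified stratum, which is precisely what distinguishes this statement from its even counterpart.
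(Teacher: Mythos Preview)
Your proposal is an outline that stops precisely at the point where the actual work begins, and the specific degeneration you sketch cannot exist.

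\textbf{The degeneration you describe is not in the boundary of the stratum.} You propose to place $p$ on a low-genus tail glued to a \emph{general} curve of complementary genus. But if $[C_1\cup_q C_2,p]$ (with $p\in C_1$) lies in $\overline{\mathcal H}_g(2g-2)$, the twisted-differential characterization forces \emph{both} components to be special: one finds $[C_2,q]\in\mathcal H_{g_2}(2g_2-2)$ and $[C_1,p,q]\in\mathcal H_{g_1}(2g-2,-2g_2)$. In particular $C_2$ is itself subcanonical, never Brill--Noether general, so your plan to invoke the classical pointed Brill--Noether theorem on the ``general'' side has no target. This is why the paper uses the degeneration with $g_1=1$: the marked point $p$ sits on an elliptic tail $E$ with $p-q$ exactly $(2g-2)$-torsion, and the other component lies in $\mathcal H^{\mathrm{odd}}_{g-1}(2g-4)$ --- still special, which necessitates induction on~$g$.

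\textbf{The ``subcanonical forces non-negative contribution'' claim is circular.} In your limit linear series count you assert that on the tail carrying $p$ ``the subcanonical structure together with the bound $a_r\le g$ forces its contribution to be non-negative''. This is not a known fact; it is (a version of) the theorem you are trying to prove, transported to a component. You yourself flag this as the ``main obstacle'' in the last paragraph, so the proposal is really a strategy with the hard step left blank. The paper fills this gap with a double induction: induction on $g$ via the elliptic-tail clutching just described (using that $a_r<2g-2$ so at most one vanishing order can match exactly on $E$, which pushes the vanishing sequence up by one almost everywhere on the $X$-aspect), and induction on $r$ reducing to the explicit base case $r=1$, $a=(2,g)$. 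In that base case the odd spin parity enters concretely as $h^0\bigl(C,\mathcal O_C((g-1)p)\bigr)=1$, which yields the contradiction. Your residual reformulation via $M=\omega_C\otimes L^{-1}$ is correct Riemann--Roch bookkeeping, but it does not by itself produce any of these steps, and the paper does not use it.
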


We have a similar theorem regarding pointed Brill-Noether conditions and the even component of the minimal stratum. 

\begin{trm} \label{maineven}
	Let $r\geq 1$, $g\geq r+3$ be natural numbers and $ a = (0\leq a_0<a_1<\cdots a_r \leq g+r)$ a vanishing sequence satisfying 
	\[ \rho(g,r,g+r, a)\coloneqq g - \sum_{i=0}^{r}(a_i-i) \leq -1\]
	and $a_r\leq g-2$. Then, if $[C,p]$ is generic in the stratum $\mathcal{H}_g^{\textrm{even}}(2g-2)$, it does not admit a $g^r_{g+r}$ with prescribed vanishing $a$ at $p$. 
\end{trm}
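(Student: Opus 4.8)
The plan is to prove the analogous statement for the even component by the same degeneration strategy as for the odd component (Theorem \ref{mainodd}), while isolating the one geometric feature that forces the stronger numerical hypotheses $g \geq r+3$ and $a_r \leq g-2$. Since the locus of $[C,p]$ admitting a $g^r_{g+r}$ with prescribed vanishing $a$ at $p$ is closed, and since $\mathcal{H}_g^{\textrm{even}}(2g-2)$ is irreducible, it suffices to exhibit a single point $[C_0,p_0]$ of the stratum (or of a suitable compactification $\overline{\mathcal{H}}_g^{\textrm{even}}(2g-2)$) for which no such linear series exists, and then invoke semicontinuity: the locus where the forbidden $g^r_{g+r}$ exists would be a closed subset, and producing one member of the stratum outside it shows the generic member avoids it as well.

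First I would reformulate the nonexistence of a $g^r_{g+r}$ with vanishing sequence $a$ at $p$ as a Brill-Noether dimension count. The hypothesis $\rho(g,r,g+r,a) \leq -1$ means the adjusted Brill-Noether number is negative, so for a \emph{generic} pointed curve $[C,p]\in\mathcal{M}_{g,1}$ no such linear series exists by the pointed Brill-Noether theorem (Eisenbud-Harris). The content of the theorem is that a generic point of the \emph{even spin} stratum, which is a proper subvariety of $\mathcal{M}_{g,1}$, is still Brill-Noether general in this pointed sense with respect to the series of type $(r, g+r, a)$. The natural mechanism is to degenerate to a pointed curve whose limit linear series can be controlled, and the standard choice is a chain or a flag curve built from a rational curve with $g$ general elliptic tails, or equivalently to specialize $[C,p]$ to the boundary where the Eisenbud-Harris theory of limit linear series pins down all vanishing sequences. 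Concretely I would set up a one-parameter family degenerating a generic even-spin differential curve to such a boundary curve $[C_0,p_0]$, verify that $[C_0,p_0]$ lies in the closure $\overline{\mathcal{H}}_g^{\textrm{even}}(2g-2)$ (this is where the even spin structure on the limit must be tracked), and then run a compatibility-of-vanishing-sequences argument across the nodes to contradict the existence of the limit $g^r_{g+r}$.

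The main obstacle, and the reason the even case demands $g\geq r+3$ and $a_r\leq g-2$ rather than the weaker bounds $g\geq r+2$ and $a_r\leq g$ of the odd case, is the compatibility between the spin parity and the degeneration. The condition $\mathcal{O}_C((2g-2)p)\cong\omega_C$ forces $(g-1)p$ to be a theta characteristic, and the even parity constrains $h^0(C,\mathcal{O}_C((g-1)p))$; when one degenerates, the limit of this theta characteristic and the induced parity on the boundary curve must remain even, which eliminates precisely the extremal vanishing profiles with $a_r\in\{g-1,g\}$. I would therefore carry out the vanishing-sequence bookkeeping at the special point with care, showing that any limit $g^r_{g+r}$ with the forbidden vanishing would either violate $\rho \leq -1$ on one of the components of $C_0$ or would force an odd spin structure, contradicting membership in $\mathcal{H}_g^{\textrm{even}}(2g-2)$. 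The hardest technical point will be establishing that the chosen boundary curve genuinely lies in the closure of the even component and that the limit-linear-series inequalities are simultaneously satisfiable only when $a_r\leq g-2$; this is where one must combine the numerics of the additive Brill-Noether number with the parity constraint coming from the theta characteristic, and it is the step I expect to require the most delicate argument.
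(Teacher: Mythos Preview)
Your proposal is not a proof but a plan, and it misses the key simplification that the paper exploits. The paper does \emph{not} re-run the flag-curve limit linear series machinery for the even component. Instead it reduces Theorem~\ref{maineven} directly to Theorem~\ref{mainodd} in one step: choose a marked elliptic curve $[E,p,q]$ with $p-q$ of exact order $g-1$ in $\mathrm{Pic}^0(E)$, and glue it to a generic $[X,y]\in\mathcal{H}_{g-1}^{\mathrm{odd}}(2g-4)$ along $y\sim q$. The resulting curve $[X\cup_{y\sim q}E,\,p]$ lies in $\overline{\mathcal{H}}_g^{\mathrm{even}}(2g-2)$ (the $(g-1)$-torsion is precisely what flips the spin parity from odd on $X$ to even on the glued curve, via the characterisation of boundary points of strata). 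One then applies verbatim the ``Step~2'' argument from the proof of Theorem~\ref{mainodd}: because $a_r\leq g-2<g-1$ and $p-q$ has exact order $g-1$, the equality $a_i+b_{r-i}=g+r$ can hold for at most one index $i$, and the limit linear series on $X$ is then forbidden by the odd-case theorem in genus $g-1$. This also explains the hypothesis $g\geq r+3$: one needs $g-1\geq r+2$ to invoke Theorem~\ref{mainodd} on $X$.

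Your outline, by contrast, proposes to degenerate to a rational spine with $g$ elliptic tails and redo the Eisenbud--Harris count while tracking parity. That strategy is not obviously wrong, but it is substantially more work, and two of your claims are not accurate. First, the bound $a_r\leq g-2$ does \emph{not} arise because larger $a_r$ ``would force an odd spin structure'' on the limit; it arises because the torsion order on the elliptic tail used to land in the even component is $g-1$, and the Step~2 inequality argument needs $a_r$ strictly below that order. Second, you never specify which flag curves lie in the closure of the even component or how the parity constraint interacts with the vanishing sequences there; this is exactly the delicate point you flag, and without it there is no argument. The paper's reduction sidesteps all of this by borrowing the hard work already done in the odd case.
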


We remark that the restriction on the vanishing order $a_r$ is very weak: If $a_0 = 0$, then the curve $[C,p]$ would admit a $g^{r-1}_{g+r-1}$ with vanishing sequence $a_1-1 < a_2-1 <\cdots < a_r-1$ at $p$. By repeating this procedure, we can assume $a_0\geq 1$ without loss of generality. The conditions $a_0 \geq 1$ and $\rho(g,r,g+r,a) = -1$ immediately imply $a_r \leq g+1$. In fact, Theorem \ref{mainodd} tells us that the only Brill-Noether divisor in $\mathcal{M}_{g,1}$ containing $\mathcal{H}_g^{\textrm{odd}}(2g-2)$ is the Weierstrass divisor, see Lemma \ref{only-Weierstrass}. 

We have similar results for the case when the pointed Brill-Noether number is positive, see Theorem \ref{posmainodd} and Theorem \ref{posmaineven}. As seen in Corollary \ref{ab-expected-dim}, these imply that all Brill-Noether loci
\[ W^r_{d}(C) \coloneqq \left\{ L\in \textrm{Pic}^d(C) \ | \ h^0(C,L) \geq r+1 \right\}\] 
 are of expected dimension when $C$ is the underlying generic curve appearing in $\mathcal{H}_g(\mu)$. This is in stark contrast to the case of Brill-Noether loci for generic curves in Hurwitz loci, where the gonality is relevant in determining both the dimension and the number of components, see \cite{Larson2Voigt} and the references therein. 

We will prove our results by degenerating our pointed curves to the boundary of $\mm_{g,1}$. For such an argument to work, we need to understand what pointed curves appear in the degeneration of strata of differentials, see \cite{FP18}, \cite{DaweiAbComp}, \cite{Daweik-diffcomp} and \cite{Daweimulti-scale}, as well as to understand the degenerations on the Brill-Noether loci, see \cite{limitlinearbasic}. 

\paragraph{\textbf{Maximal Brill-Noether loci and the Auel-Haburcak Conjecture}}

The minimal strata $\mathcal{H}^{\textrm{odd}}_g(2g-2)$ and  $\mathcal{H}^{\textrm{even}}_g(2g-2)$ are the loci parametrizing Weierstrass points having vanishing orders $(0,1,2,\ldots, g-2, 2g-2)$ and $(0,1,2,\ldots, g-3, g-1, 2g-2)$ respectively (cf. \cite{subcanbul}). Our results imply that these loci are not contained in many of the pointed Brill-Noether divisors in $\mathcal{M}_{g,1}$, and consequently not contained in many higher codimension pointed Brill-Noether loci. The interaction between various distinct Brill-Noether conditions is a topic that received considerable attention in recent years, with a lot of focus on the unmarked case, see  \cite{Haburcak-Auel}, \cite{Teixidor-BN} and \cite{Haburcak-Larson}. Theorem \ref{mainodd} and Theorem \ref{maineven} can be interpreted as a pointed version of this study. The strength of this result comes from the fact that one Brill-Noether locus is of high codimension $g$, while the other is only of codimension $1$ in $\mathcal{M}_{g,1}$. In fact, Theorem \ref{mainodd} can be used to study the Auel-Haburcak Conjecture. We obtain two non-containment criterions for maximal Brill-Noether loci in $\mathcal{M}_g$, see Theorem \ref{maxBNcrit1} and Theorem \ref{maxBNcrit2}. In particular, this implies the Auel-Haburcak Conjecture for $g  = 21$.

\paragraph{ \textbf{Birational geometry}}

Theorem \ref{mainquad} can be used to study the birational geometry of projectivized quadratic strata:
\begin{trm} \label{birational}
	Let $\mu = (m_1,\ldots, m_n)$ be a positive partition of $20$. Then $\mathcal{Q}_6(\mu)$ is uniruled. 
\end{trm}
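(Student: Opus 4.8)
The plan is to produce, through the generic point of $\mathcal{Q}_6(\mu)$, a rational curve arising from a pencil of genus $6$ curves on the quintic del Pezzo surface, so that uniruledness follows from these covering pencils. The first step is to pin down how general the underlying curve is. Theorem \ref{mainquad} guarantees that the underlying curve $C$ of a generic point $[C,p_1,\ldots,p_n]\in\mathcal{Q}_6(\mu)$ is Brill--Noether general, i.e. all its $W^r_d(C)$ have the expected dimension; in particular $C$ is neither trigonal, nor bielliptic, nor a smooth plane quintic (each of these carries excess linear series). By Mukai's description of genus $6$ curves, such a $C$ then has its canonical model lying on a quintic del Pezzo surface $S\subset\mathbb{P}^5$, unique up to the finite group $\mathrm{Aut}(S)$, as a member of $|-2K_S|$, with $\omega_C\cong(-K_S)|_C$ and $\omega_C^2\cong(-2K_S)|_C$. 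The crucial point is that the bicanonical system is a linear projection of $|-2K_S|$: one has $h^0(S,-2K_S)=16$, the restriction $H^0(S,-2K_S)\to H^0(C,\omega_C^2)$ is surjective with one--dimensional kernel spanned by the section cutting out $C$, and hence $D\mapsto D|_C$ realises $|-2K_S|\dashrightarrow|2K_C|$ as the projection of $\mathbb{P}^{15}$ from the point $[C]$.

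Using this, I lift the divisor $\sum_{i=1}^n m_i p_i\in|2K_C|$ to a curve $D\in|-2K_S|$ with $D\cap C=\sum_i m_i p_i$, so that $D$ meets $C$ at each $p_i$ with multiplicity exactly $m_i$. Consider now the pencil $\langle C,D\rangle\subset|-2K_S|$, the line through $[C]$ and $[D]$. Its base scheme is $C\cap D=\sum_i m_i p_i$, of length $20$, so every member $C_t$ of the pencil contains this scheme; since $D\cap C_t$ has degree $(-2K_S)^2=20=\sum_i m_i$ and already contains the base locus, it equals $\sum_i m_i p_i$. Therefore $\sum_i m_i p_i\sim 2K_{C_t}$ on each $C_t$, with the marked points retaining multiplicity type exactly $\mu$, and $\{(C_t,p_1,\ldots,p_n)\}_{t\in\mathbb{P}^1}$ is a rational curve contained in $\mathcal{Q}_6(\mu)$ and passing through our chosen generic point. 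Since the map $|-2K_S|/\mathrm{Aut}(S)\dashrightarrow\mathcal{M}_6$ is generically finite, the curves $C_t$ vary nontrivially and the family is noncontracted; letting the generic point vary, these pencils cover $\mathcal{Q}_6(\mu)$, which is thus uniruled. The same construction applies uniformly to every positive partition $\mu$ of $20$.

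The main obstacle is the transversality required at the points $p_i$ of higher multiplicity. To conclude that the generic member $C_t$ is a smooth genus $6$ curve carrying the reduced configuration $(p_1,\ldots,p_n)$ with prescribed contact divisor $\sum_i m_i p_i$, I need the base scheme $C\cap D$ to be curvilinear at each $p_i$, i.e. $D$ to be smooth there and tangent to $C$ to order $m_i$ along a common branch; granting this, computing the linear part of $\lambda f+\mu h$ in local coordinates shows that all but finitely many members of the pencil are smooth at $p_i$. Arranging that $D$ can be chosen with this curvilinear contact is precisely where the genericity furnished by Theorem \ref{mainquad} is indispensable, and it is the delicate step---most of all for the strata with few marked points, such as the minimal stratum $\mu=(20)$, whose underlying curves are special in $\mathcal{M}_6$ yet, by Theorem \ref{mainquad}, remain Brill--Noether general and so still lie on the del Pezzo surface. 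A secondary point is to dispatch any hyperelliptic or other non-primitive components of the quadratic strata separately, where the del Pezzo model is unavailable but the loci are themselves readily seen to be uniruled.
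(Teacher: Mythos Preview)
Your overall strategy---verify Mukai's three conditions so that the generic underlying curve lies in $|-2K_S|$ on a quintic del Pezzo, then sweep out $\mathcal{Q}_6(\mu)$ by the pencil through $[C]$ with base scheme $\sum m_ip_i$---is exactly the paper's approach; your account of the pencil is in fact more explicit than the paper's, which simply refers back to \cite{BAR18} and \cite{Budstrata} for that construction. The transversality/curvilinearity concern you flag is real, but the paper does not resolve it in-line either; it is absorbed into those references.

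There is, however, a genuine gap in how you exclude the bi-elliptic case. You assert that Theorem~\ref{mainquad} makes $C$ Brill--Noether general in the sense that every $W^r_d(C)$ has the expected dimension. It does not: Theorem~\ref{mainquad} only gives \emph{emptiness} of pointed loci with $\rho\le -1$, hence (after stripping base points) emptiness of $W^r_d(C)$ when $\rho(g,r,d)<0$. That suffices for ``not trigonal'' ($\rho(6,1,3)=-2$) and ``not a plane quintic'' ($\rho(6,2,5)=-3$), but not for bi-elliptic: a degree-two cover $f\colon C\to E$ yields the one-parameter family $\{f^*\mathcal O_E(2e)\}_{e\in E}\subset W^1_4(C)$, so what you need is $\dim W^1_4(C)=0=\rho(6,1,4)$, i.e.\ the \emph{nonnegative} case. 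That is the content of Theorem~\ref{posmainquad}/Corollary~\ref{quad-expected-dim}, not of Theorem~\ref{mainquad}. With the corrected citation your shortcut is valid and arguably cleaner than what the paper does: the paper instead degenerates $[C,p]$ to a chain of six pairwise non-isomorphic elliptic curves in $\overline{\mathcal Q}_6(20)$ and rules out an admissible bi-elliptic cover by a direct ramification count (one obtains at least $11$ non-nodal branch points, against the $10$ permitted by Riemann--Hurwitz). So the two proofs diverge only at this one step, and yours trades that explicit degeneration for an appeal to the positive-$\rho$ result proved earlier in the paper.
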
 
Here, we denoted 
	  \[ \mathcal{Q}(\mu) = 
\begin{cases} 
	\mathcal{H}^2_g(\mu) \setminus \mathcal{H}_g(\frac{\mu}{2})& \textrm{if all entries of} \ \mu \ \textrm{are even} \\
	\mathcal{H}^2_g(\mu) & \mathrm{otherwise.} 
\end{cases}
\]

The proof relies on a method used in \cite{BAR18}, \cite{Budstrata} and based on an idea that first appeared in \cite{FarVer}. 
If $[C,x_1,\ldots,x_n]$ is a generic point of $\mathcal{Q}_6(\mu)$ and $C$ is embedded as a hyperplane section for the linear system $|-2K_S|$ of a del Pezzo surface $S \in \mathcal{P}_4$ in $\mathbb{P}^{15}$, then there exists a $13$ dimensional linear subspace $\Lambda_{\mu}$ of $\mathbb{P}^{15}$ satisfying 
\[ \Lambda_{\mu}\cdot S = \sum_{i=1}^n m_ix_i \]
By intersecting the pencil of hyperplanes containing $\Lambda_{\mu}$ with the del Pezzo surface $S$ we get a rational curve 
\[\mathbb{P}^1 \rightarrow \overline{\mathcal{Q}}_6(\mu)\]
passing through $[C,x_1,\ldots, x_n]$, hence implying uniruledness. 

In genus $6$ the restriction $l(\mu)\geq 4$ on the length of the partition in \cite{Budstrata} appears as it was not known whether a generic point of $[C,x_1,\ldots, x_n]$ in the stratum $\mathcal{Q}_6(\mu)$ admits such an embedding in a del Pezzo surface when $l(\mu)\leq 3$. We will use a criterion of Mukai to conclude that such $[C]$ can be embedded in a del Pezzo surface in $\mathcal{P}_4$, see \cite{Mukgrass}. 

Putting together \cite[Theorem 4.3]{Barthesis}, \cite[Proposition 4.1]{Budstrata} and Theorem \ref{birational} we obtain that for every genus $3\leq g \leq 6$ and every positive partition $\mu$ of $4g-4$, all components of $\mathcal{Q}_g(\mu)$ are uniruled. We expect this to be a complete description of all strata of quadratic differentials not of general type. The Kodaira dimension of strata seems to depend on the genus but not on the positive partition $\mu$; and for genus $g =7$ we have that $\mathcal{Q}_7(1^{24})$ is of general type as there is a finite map from it to $\mathcal{M}_{7,17}$, which is of general type, see \cite{Logan} and \cite{Farkosz}.   

\paragraph{ \textbf{Prym curves and strata of quadratic differentials}}

One important motivation in studying pointed Brill-Noether condition is that they naturally appear as degenerations of the unpointed case. It was seen in \cite{BudPBN} and \cite{BudPrymIrr} that in fact pointed Brill-Noether conditions appear naturally as degenerations of Prym-Brill-Noether conditions. 

In Section \ref{s:prym}, we study the case when all entries of $\mu$ are even, and hence we have a map $\mathcal{Q}_g(\mu) \rightarrow \mathcal{R}_g$. We prove that for $[C,\eta]$ generic in the image, all Prym-Brill-Noether loci are of expected dimension. Similarly, we prove that the image is not contained in several divisors in $\cR_g$.  

\textbf{Acknowledgements:} I am very grateful to Ignacio Barros, Gavril Farkas, Richard Haburcak and Martin M\"oller for their helpful comments on this paper. The author acknowledges support by Deutsche Forschungsgemeinschaft
(DFG, German Research Foundation) through the Collaborative Research
Centre TRR 326 \textit{Geometry and Arithmetic of Uniformized Structures}, project number 444845124.
\section{Strata of differentials and Brill-Noether divisors}

In this section, we will study the position of projectivized strata of $k$-differentials with respect to different Brill-Noether loci. We will focus on the cases $k=1$ and $k=2$. Our first observation is that we can focus on the unique cases $\mu = (2g-2)$ and $\mu = (4g-4)$ respectively: For any positive partition $\mu = (m_1,m_2,\ldots, m_n)$ of $2g-2$ we have a clutching map 
\[ \mathcal{H}_g(2g-2)\rightarrow \mathcal{H}_g(\mu) \]
sending a curve $[C,p]$ to $[C\cup_p \mathbb{P}^1, p_1,\ldots, p_n]$, where the points $p_i$ are on the rational component. Because the marked curve $[\mathbb{P}^1, p, p_1,\ldots, p_n]$ is pointed Brill-Noether general, many non-inclusions can be decided at the level of the stratum $\mathcal{H}_g(2g-2)$. 

For a generic element $[C,p]$ in $\mathcal{H}_g(2g-2)$, we are interested to say when the pointed Brill-Noether locus 
  \[ W^r_{d,a}(C,p) \coloneqq \left\{ L\in \textrm{Pic}^d(C) \ | \ h^0(C,L(-a_i\cdot p)) \geq r+1-i \ \forall \ 0\leq i \leq r \right\}.\] 
is empty. Its expected dimension is the pointed Brill-Noether number \[\rho(g,r,d,a) \coloneqq g-(r+1)(g-d+r) - \sum_{i=0}^r(a_i-i).\] 
Our goal is to prove that a generic $[C,p] \in \mathcal{H}_g(2g-2)$, which is a very special curve in $\mathcal{M}_{g,1}$, behaves in many respects as a generic one, i.e. for almost all numerical choices $\rho(g,r,d,a) < 0$, the locus $W^r_{d,a}(C,p)$ is empty. 

For this, we study $g^r_d$'s with vanishing sequence $a = (0\leq a_0<a_1<\cdots a_r \leq d)$ at $p$. These are pairs $(V,L)$ where $L\in \textrm{Pic}^d(C)$ and $V\subseteq H^0(C,L)$ is $(r+1)$-dimensional, satisfying the conditions 
\[ \textrm{dim}\text{\Large (}V\cap H^0\text{\large(}C,L(-a_i\cdot p)\text{\large)}\text{\Large)} \geq r+1-i.\]
The motivation for studying such pairs $(V,L)$ instead of just line bundles $L$, is that they behave well with respect to degeneration to curves of compact type.

We denote by $\left\{C_i\right\}_{i \in I}$ the irreducible components of a curve $X$ of compact type. The degeneration of a $g^r_d$ to $X$ is a family $\left\{(V_i, L_i) \ | \ (V_i, L_i) \ \textrm{is a} \ g^r_d \ \textrm{on} \ C_i\right\}_{i \in I}$ satisfying certain ramification conditions at the nodes of $X$: \newline
$(\star)$ If $y$ is a node of $X$ connecting $C_i$ and $C_j$, and the vanishing sequences of $(V_i, L_i)$ and $(V_j, L_j)$ at $y$ are $(b_0,\ldots, b_r)$ and $(c_0,\ldots, c_r)$, then we have the inequalities 
\[ b_k + c_{r-k} \geq d \ \textrm{for every} \ 0\leq k \leq r. \]

To conclude Theorem \ref{mainodd} and Theorem \ref{maineven}, we will repeatedly use degenerations to such curves and induct on the genus.  

The stratum $\mathcal{H}_g(2g-2)$ has two non-hyperelliptic components, determined by the parity of the spin structure. As seen in Theorem \ref{mainodd} and Theorem \ref{maineven}, they behave slightly different with respect to Brill-Noether conditions. 

\textbf{Proof of Theorem \ref{mainodd}:} It is clear that we can assume 
	\[ \rho(g,r,g+r, a)\coloneqq g - \sum_{i=0}^{r}(a_i-i) = -1.\]
We prove the theorem in three steps: 
\begin{enumerate}
	\item We first show the result for $r = 1$ and $a_1 = g$.
	\item We assume the theorem holds for  $a_r = g$ in all genera and show it holds for $a_r<g$ too. 
	\item We use induction on $r$. We assume the theorem holds for $r-1$ and prove it holds for $r$. 
\end{enumerate}

\textbf{Step 1:} When $r = 1$ and $a_1 = g$, the condition 
	\[ \rho(g,r,g+r, a)\coloneqq g - \sum_{i=0}^{r}(a_i-i) = -1\]
implies $a_0= 2$. Assume $[C,p]$ admits a $g^1_{g+1}$ with prescribed vanishing $(2,g)$ at $p$ and let $L$ the associated line bundle of such a limit linear series. Then there exists $x\in C$ such that $L = \OO_C(g\cdot p +x)$. 

Because $a_0 = 2$, we have $h^0\text{\Large(}C,\OO_C\text{\large(}(g-2)\cdot p + x \text{\large)}\text{\Large)}\geq 2$. By Riemann-Roch this implies \[h^0\text{\large(}C,\OO_C(g\cdot p - x)\text{\large)}\geq 2.\]
Because $h^0\text{\large(}C,\OO_C(g\cdot p)\text{\large)}=  2$, the point $p$ is a base-point of $|g\cdot p|$. This implies $x = p$ and we get the contradiction 
\[1 = h^0\text{\Large(}C,\OO_C\text{\large(}(g-1)\cdot p  \text{\large)}\text{\Large)}\geq 2.\] 

\textbf{Step 2:} Given $r$, we assume the theorem holds in any genus whenever we have the equality $a_r = g$. We want to prove the theorem also holds when $a_r < g$. 

We prove this by induction on the genus. When $g = r+2$, there is no vanishing sequence $a$ satisfying  
\[ \rho(g,r,g+r, a) = -1 \ \textrm{and} \ a_r < g\]
Since $a_r = r+2 = g$, this initial case is completely covered by our assumption. 

We assume the theorem is true in genus $g-1$ and prove it in genus $g$. Let $[E,p,q]$ be a marked elliptic curve satisfying that $p-q$ is exactly $(2g-2)$-torsion. We consider the morphism 
\[\mathcal{H}_{g-1}^{\textrm{odd}}(2g-4) \rightarrow \overline{\mathcal{H}}_g^{\textrm{odd}}(2g-2)\]
sending a curve $[X,y]$ to $[X\cup_{y\sim q}E,p]$. 
This map is well-defined: Using \cite[Theorem 1.3]{DaweiAbComp}, we see that $[X\cup_{y\sim q}E,p]\in \overline{\mathcal{H}}_g^{\textrm{odd}}(2g-2)$. 

Assume by contradiction that the theorem does not hold for genus $g$. Then a generic $[C,p] \in \mathcal{H}_g^{\textrm{odd}}(2g-2)$ admits a $g^r_{g+r}$ with prescribed vanishing $a$ at $p$, where the vanishing $a$ satisfies the conditions in the hypothesis. Even more, $a_r < g$ by our assumption. 

As a consequence, the curve $[X\cup_{y\sim q}E, p]$ admits a limit $g^r_{g+r}$ with prescribed vanishing $a$ at $p$. Let $l_X$ and $l_E$ be the aspects of this limit linear series. 

Let $0\leq b_0 < b_1< \cdots < b_r \leq g+r$ be the vanishing orders of $l_E$ at $q$, and $0\leq c_0 < c_1< \cdots < c_r \leq g+r$ be the vanishing orders of $l_X$ at $y$. 

Because $a_r< g< 2g-2$ and $p-q$ is exactly $(2g-2)$-torsion, it follows that $a_i+b_{r-i} = g+r$ for at most one value $0\leq i \leq r$. Then the vanishing orders respect the conditions 
\[ a_i + b_{r-i} \leq g+r \ \textrm{and} \ a_j+b_{r-j} \leq g+r-1 \ \textrm{for} \ j\neq i \]
for some value $0\leq i\leq r$. 

The glueing conditions $c_j + b_{r-j} \geq g+r$ imply 
\[ c_i\geq a_i \ \textrm{and} \ c_j\geq a_j+1 \ \textrm{for} \ j\neq i \] 

To conclude Step 2, it is sufficient to show a generic $[X,y]\in \mathcal{H}_{g-1}^{\textrm{odd}}(2g-4)$ does not admit a $g^r_{g+r}$ with prescribed vanishing at $y$ given by 
\[ a^i \coloneqq (a_0+1-\delta^i_0,\ldots, a_j+1-\delta^i_j,\ldots, a_r+1-\delta^i_r) \]
for any $0\leq i \leq r$. Here we denoted $\delta^i_j$ to be the Kronecker delta. 

If $a_0+1-\delta^i_0 > 0$, this is equivalent to showing that $[X,y]$ does not admit a $g^r_{g-1+r}$ with vanishing at $y$ given by
\[(a_0-\delta^i_0,\ldots, a_j-\delta^i_j,\ldots, a_r-\delta^i_r) \]
This follows from the induction hypothesis. 

If $a_0 + 1 -\delta^i_0 = 0$, this implies $a_0 = 0$ and $i=0$. We want to show that  $[X,y]$ does not admit a $g^r_{g+r}$ with prescribed vanishing at $y$ given by 
\[ (0,a_1+1\ldots, a_j+1,\ldots, a_r+1). \]
Assume that it does and let $L_X$ be its associated line bundle. Then we have $h^0(X, L_X(-y)) \geq r+1$ and hence $[X,y]$ admits a $g^r_{g-1+r}$ with prescribed vanishing at $y$ given by $(0,a_1\ldots, a_j,\ldots, a_r)$. This is not possible because of the induction hypothesis. 

We conclude that $[C,p]$ does not admit a $g^r_{g+r}$ with prescribed vanishing $a$ at $p$. This contradicts our assumption that it does. This concludes Step 2. 

\textbf{Step 3:} We assume the proposition to be true for $r-1$ and we prove it for $r$. Because of \textbf{Step 2}, we can assume that $a_r = g$. Assume that $[C,p]$ admits a $g^r_{g+r}$ with prescribed vanishing $a$ at $p$ where $a$ satisfies 
\[ \rho(g,r,g+r, a) = -1 \ \textrm{and} \ a_r = g.\]
Denote by $L$ the line bundle associated to such a $g^r_{g+r}$. We distinguish two cases, depending on the value of $a_0$. 

If $a_0 = 0$, we look at $L(-p)$ and see that $[C,p]$ admits a $g^{r-1}_{g+r-1}$ with prescribed vanishing $(a_1-1,\ldots, a_r-1)$ at $p$. This is not possible because of the induction hypothesis. 

If $a_0 \geq 1$, because $a_r = g$ the only possibility for the vanishing sequence $a$ is 
\[ a = (1,2,\ldots, r-1, r+1, g).\]
There exists a degree $r$ effective divisor on $C$ such that 
\[ L = \OO_C(g\cdot p + D_r). \]
We have that
\[h^0\text{\Large(}C,\OO_C\text{\large(}(g-1)\cdot p + D_r  \text{\large)}\text{\Large)}\geq r+1\] 
By Riemann-Roch, this implies 
\[h^0\text{\Large(}C,\OO_C\text{\large(}(g-1)\cdot p - D_r  \text{\large)}\text{\Large)}\geq 1\]
The divisor $D_r$ is in the base locus of $|(g-1)\cdot p|$ and hence $D_r = r\cdot p$. We reach the contradiction  
\[1 = h^0\text{\Large(}C,\OO_C\text{\large(}(g-1)\cdot p  \text{\large)}\text{\Large)} = h^0\text{\Large(}C,L\text{\large(}-(r+1)\cdot p  \text{\large)}\text{\Large)}\geq 2\] 
This concludes Step 3. It is clear that the three steps imply the theorem.
\hfill $\square$

In fact, this theorem implies that the only Brill-Noether divisor containing $\mathcal{H}_g^{\textrm{odd}}(2g-2)$ is the Weierstrass divisor. This is an immediate consequence of the following lemma. 

\begin{lm} \label{only-Weierstrass} Let $g\geq 3$ and $1\leq r \leq g-1$. We consider the locus of pointed curves $[C,p] \in \cM_{g,1}$ admitting a $g^r_{g+r}$ with vanishing orders at $p$ equal to $(1,2,\ldots, r, g+1)$. This locus is the Weierstrass divisor. 
\end{lm}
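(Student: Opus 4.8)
The plan is to prove the equality of loci by showing that \emph{both} the stated Brill--Noether locus and the Weierstrass divisor coincide with
\[
\W \coloneqq \left\{ [C,p] \in \cM_{g,1} \ \middle| \ h^0\bigl(C,\OO_C(g\cdot p)\bigr) \geq 2 \right\},
\]
the locus where $p$ is a Weierstrass point. Everything rests on a single Riemann--Roch computation, which I would apply in each of the two directions. Throughout I read ``a $g^r_{g+r}$ with vanishing $(1,2,\ldots,r,g+1)$ at $p$'' in the sense already fixed in the text, i.e. as a pair $(V,L)$ with $\dim\bigl(V\cap H^0(L(-a_i\cdot p))\bigr)\geq r+1-i$ for the sequence $a=(1,2,\ldots,r,g+1)$.

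First I would establish the inclusion of the stated locus into $\W$. Suppose $[C,p]$ admits such a $(V,L)$. The top condition yields a section of $L$ whose divisor is $\geq (g+1)\cdot p$; since $\deg L = g+r$, this forces $L\cong \OO_C\bigl((g+1)\cdot p + E\bigr)$ for an effective $E$ of degree $r-1$. The bottom condition says every section of $V$ vanishes at $p$, so $V\subseteq H^0\bigl(L(-p)\bigr)=H^0(M)$ with $M\coloneqq \OO_C(g\cdot p + E)$, whence $h^0(M)\geq r+1$. Applying Riemann--Roch and Serre duality to $M$, of degree $g+r-1$, gives $h^0(K_C-M)=h^0(M)-r\geq 1$, and since $E\geq 0$ this forces $h^0(K_C-g\cdot p)\geq h^0(K_C-g\cdot p-E)\geq 1$, equivalently $h^0(\OO_C(g\cdot p))\geq 2$. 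So $[C,p]\in\W$. I would stress that only the extreme vanishing orders $a_0\geq 1$ and $a_r\geq g+1$ are used here; the intermediate ones play no role in this direction.

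For the reverse inclusion, suppose $[C,p]\in\W$, equivalently $h^0(K_C-g\cdot p)\geq 1$, so there is an effective $A\sim K_C-g\cdot p$ of degree $g-2$. Using the hypothesis $r\leq g-1$, I would choose an effective subdivisor $E\leq A$ of degree $r-1$ and set $M\coloneqq \OO_C(g\cdot p + E)$; the same identity gives $h^0(M)=r+h^0(A-E)\geq r+1$. The tautological section cutting out $g\cdot p + E$ has vanishing order $\geq g$ at $p$, so any $(r+1)$-dimensional $V\subseteq H^0(M)$ containing it has vanishing sequence $\geq(0,1,\ldots,r-1,g)$ at $p$ — the lower part being automatic, since any $r$ distinct nonnegative integers dominate $(0,1,\ldots,r-1)$. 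Twisting $V$ up by a section of $\OO_C(p)$ produces a $g^r_{g+r}$ on $L\coloneqq M(p)=\OO_C\bigl((g+1)\cdot p + E\bigr)$ with vanishing $\geq(1,2,\ldots,r,g+1)$ at $p$, placing $[C,p]$ in the stated locus.

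The crux — and the only nontrivial ingredient — is the Riemann--Roch identity $h^0(M)=r+h^0(K_C-M)$ for $M=\OO_C(g\cdot p+E)$, which converts the existence of the prescribed $g^r_{g+r}$ into the single scalar condition $h^0(\OO_C(g\cdot p))\geq 2$. The one point to watch is the degree bound $\deg E = r-1\leq g-2$ needed to extract $E$ from $A$ in the second direction, which is exactly the hypothesis $r\leq g-1$; the intermediate vanishing orders $2,\ldots,r$ are absorbed automatically and never need to be examined.
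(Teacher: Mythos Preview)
Your proof is correct. The forward inclusion is identical in substance to the paper's: write $L\cong\OO_C\bigl((g+1)p+D\bigr)$ from $a_r=g+1$, note $h^0\bigl(L(-p)\bigr)\geq r+1$ from $a_0=1$, and apply Riemann--Roch to get $h^0\bigl(\omega_C(-gp-D)\bigr)\geq 1$, so $p$ is Weierstrass.

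The paper stops there; it writes only ``hence the conclusion'' and never argues the reverse inclusion. The implicit justification is presumably that the Weierstrass divisor is irreducible and the stated pointed Brill--Noether locus, having $\rho=-1$, is of pure codimension one, so containment forces equality. Your explicit construction---choosing an effective $E\leq A$ of degree $r-1$ inside some $A\in|K_C-gp|$, then exhibiting the linear series on $\OO_C\bigl((g+1)p+E\bigr)$---is more self-contained: it produces the required $g^r_{g+r}$ directly, avoids any appeal to irreducibility or dimension counts, and is the place where the hypothesis $r\leq g-1$ is actually needed. The paper's argument, as written, never invokes that bound.
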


\begin{proof}
	Let $L$ be the underlying line bundle of such a $g^r_{g+r}$. Then, there exists an effective divisor $D$ of degree $r-1$ such that we have 
	\[ L \cong \OO_C\text{\large(} (g+1)p+D\text{\large)}\]
	From the hypothesis, we have $h^0\text{\large(}C, L(-p)\text{\large)} \geq r+1$.
   
   Using the Riemann-Roch Theorem we deduce 
   \[ h^0\text{\large(}C, \omega_C(-gp-D)\text{\large)}\geq 1\]
   In particular, $p$ is a Weierstrass point of $C$, hence the conclusion. 
\end{proof}

Theorem \ref{maineven} can be immediately be obtained from Theorem \ref{mainodd}.

\textbf{Proof of Theorem \ref{maineven}:}

	Let $[E,p,q]$ be a marked elliptic curve satisfying that $p-q$ is exactly $(g-1)$-torsion. We consider the morphism 
	\[\mathcal{H}_{g-1}^{\textrm{odd}}(2g-4) \rightarrow \overline{\mathcal{H}}_g^{\textrm{even}}(2g-2)\]
	sending a curve $[X,y]$ to $[X\cup_{y\sim q}E,p]$. Because $a_r < g-1$ and $p-q$ is exactly $(g-1)$-torsion, we can apply Step 2 in the proof of Theorem \ref{mainodd}. 
	
	Hence, Theorem \ref{maineven} is simply a consequence of Theorem \ref{mainodd}. 
\hfill $\square$

For the projectivized strata of quadratic differentials, we have a more general result. 

\begin{trm} \label{mainquad}
	Let $r\geq 1$, $g\geq 3$ natural numbers and $ a = (0\leq a_0<a_1<\cdots a_r \leq g+r)$ a vanishing sequence satisfying 
\[ \rho(g,r,g+r, a)\coloneqq g - \sum_{i=0}^{r}(a_i-i) \leq -1.\]
 Then, a generic element $[C,p]$ in the stratum $\mathcal{Q}_g(4g-4)$ does not admit a $g^r_{g+r}$ with prescribed vanishing $a$ at $p$. 
\end{trm}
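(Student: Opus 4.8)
The plan is to adapt the three-step degeneration scheme of Theorem \ref{mainodd} to the quadratic setting, the essential new ingredient being that the elliptic tail now carries a \emph{quadratic} twisted differential. As before, I would first reduce to the case $\rho(g,r,g+r,a) = -1$ and, via the clutching map $\mathcal{Q}_g(4g-4) \to \mathcal{Q}_g(\mu)$ together with the pointed Brill-Noether generality of $[\mathbb{P}^1, p, p_1, \ldots, p_n]$, to a generic element of the minimal stratum $\mathcal{Q}_g(4g-4)$. I would also assume $a_0 \geq 1$ without loss of generality: if $a_0 = 0$, twisting the underlying bundle by $-p$ produces a $g^{r-1}_{g+r-1}$ with the shifted vanishing sequence, lowering $r$. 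A crucial bookkeeping consequence is that $a_0 \geq 1$ forces every $a_i - i \geq 1$, so $\rho = -1$ gives $\sum_{i=0}^r (a_i - i) = g+1 \geq r+1$, hence $r \leq g$ and $a_r - a_0 \leq g + r \leq 2g$.

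For the genus induction I would attach to a generic $[X,y] \in \mathcal{Q}_{g-1}(4g-8)$ an elliptic curve $[E,p,q]$, glued at $y \sim q$, with $p$ the marked point. Computing the twisted quadratic differential on the tail, the node-matching order $-2k = -4$ together with $\deg \omega_E^2 = 0$ forces $\xi_E$ to have a zero of order $4g-4$ at $p$ and a pole of order $4g-4$ at $q$; since $\omega_E^2$ is trivial this is equivalent to $p - q$ being exactly $(4g-4)$-torsion on $E$. Invoking the compactification of strata of $k$-differentials (\cite{Daweik-diffcomp}, \cite{Daweimulti-scale}) I would check that $[X \cup_{y\sim q} E, p]$ lies in $\overline{\mathcal{Q}}_g(4g-4)$ and, because the quadratic differential on $X$ is primitive, that the glued object stays in the closure of the primitive locus rather than in $\overline{\mathcal{H}}_g(2g-2)$. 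Supposing the conclusion fails, a generic $[C,p]$ admits a $g^r_{g+r}$ with vanishing $a$ at $p$, which specializes to a limit $g^r_{g+r}$ on $X \cup E$ with aspects $l_X, l_E$. Here the larger torsion does the work: two indices $i \neq i'$ can both satisfy $a_i + b_{r-i} = g+r = a_{i'} + b_{r-i'}$ only if $p - q$ is $|a_i - a_{i'}|$-torsion, but $0 < |a_i - a_{i'}| \leq a_r - a_0 \leq 2g < 4g-4$ for $g \geq 3$, so at most one such equality can occur. The glueing inequalities $c_j + b_{r-j} \geq g+r$ then give $c_i \geq a_i$ and $c_j \geq a_j + 1$ for $j \neq i$, and, since all entries of the resulting sequence $a^i$ are positive, twisting $l_X$ down by $y$ exhibits on $X$ a $g^r_{(g-1)+r}$ with vanishing $a^i - (1,\ldots,1)$ at $y$, of pointed Brill-Noether number $-1$ in genus $g-1$. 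This contradicts the inductive hypothesis. It is precisely this step that dispenses with any restriction on $a_r$, in contrast to Theorem \ref{mainodd}, where the smaller torsion $2g-2$ forced the hypothesis $a_r \leq g$.

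The induction bottoms out at $g = 3$, where only finitely many pairs $(r,a)$ with $a_0 \geq 1$ survive (forcing $r \leq 3$), and at the diagonal sequences $a = (1,2,\ldots,r+1)$ occurring when $g = r$, for which $p$ is a base point of the series and a direct Riemann-Roch count yields a contradiction. I expect the main obstacle to lie in these base cases, because the clean Weierstrass shortcut available in the abelian setting is no longer free: for quadratic differentials the marked point need not be a Weierstrass point, so the maximal case $a = (1,g+1)$ must be excluded by hand. Here I would use only the primitive relation $2K_C \sim (4g-4)p$ together with $K_C \not\sim (2g-2)p$ to show that a generic $[C,p] \in \mathcal{Q}_g(4g-4)$ satisfies $h^0\text{\large(}C,\OO_C(gp)\text{\large)} = 1$, i.e. that $p$ is not a Weierstrass point, and analogously to rule out the remaining low-$r$ configurations. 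Tracking primitivity faithfully through the boundary, and establishing these base cases without the subcanonical identity $\omega_C \cong \OO_C((2g-2)p)$, is the part I expect to require the most care.
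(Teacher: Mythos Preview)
Your approach is essentially the paper's: reduce to $\rho=-1$, reduce large $r$ via the $a_0=0$ trick (the paper phrases this as ``if the theorem holds for $r\le g$ it holds for all $r$''), handle $g=3$ by a finite case analysis using $8p\sim 2K_C$ and non-hyperellipticity, and for $g\ge 4$ attach an elliptic tail $[E,p,q]$ with $p-q$ of exact order $4g-4$ so that the torsion bound $a_r-a_0\le g+r\le 2g<4g-4$ forces at most one equality $a_i+b_{r-i}=g+r$ and the limit-linear-series step goes through exactly as in Theorem~\ref{mainodd}.

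Two of your anticipated obstacles are phantoms. The ``diagonal'' case $g=r$, $a=(1,2,\ldots,r+1)$ is not a separate base case: the inductive hypothesis is the full statement $T(g-1)$ for all $r\ge 1$, and once you pass to the $X$-aspect you only need that its actual vanishing sequence $c$ at the node (which is strictly increasing) satisfies $c_j\ge a_j+1-\delta^i_j$ and $c_0\ge 1$; you never need $a^i$ itself to be strictly increasing. Likewise the ``maximal'' case $r=1$, $a=(1,g+1)$ needs no separate Weierstrass argument in arbitrary genus, since $a_1-a_0=g<4g-4$ already lets the elliptic-tail step run; only at $g=3$ do you have to work by hand, and there the paper's short Riemann--Roch computations using $2K_C\sim 8p$ and non-hyperellipticity dispose of the four surviving sequences $(1,4),(2,3),(1,2,4),(1,2,3,4)$ (and their $a_0=0$ extensions). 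Your instinct that primitivity must be tracked through the boundary is correct; the paper handles this by citing \cite{Daweik-diffcomp} for the well-definedness of the clutching $\mathcal{Q}_{g-1}(4g-8)\to\overline{\mathcal{Q}}_g(4g-4)$.
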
 
\begin{proof} It is enough to prove the theorem for vanishing sequences satisfying 
	\[ \rho(g,r,g+r, a) = -1.\]
	We prove the theorem in three steps: 
	\begin{enumerate}
		\item We show that if the theorem holds for $r\leq g$, it also holds for $r\geq g+1$. 
		\item We prove the theorem for $g = 3$. 
		\item We prove the result in general by induction on the genus. 
	\end{enumerate}
    
    \textbf{Step 1:} Assume the theorem holds for $r-1\geq g$ and we prove it for $r+1$. 
    
    If $r\geq g+1$ and $a_0 \geq 1$, the condition 
	\[ \rho(g,r,g+r, a) = -1\]
	cannot be satisfied. Hence, we can assume $a_0 = 0$. If $[C,p]$ admits a $g^r_{g+r}$ with prescribed vanishing $a$ at $p$, then it admits a $g^{r-1}_{g+r-1}$ with prescribed vanishing at $p$ given by $(a_1-1,\ldots, a_r-1)$. Hence, the conclusion follows from the inductive step. 
	
	\textbf{Step 2:} We assume that a generic element of $[C,p]$ of $\mathcal{Q}_3(8)$ admits a $g^r_{3+r}$ with ramification profile $a$ at $p$ satisfying 
		\[ \rho(3,r,3+r, a) = -1.\]
	We denote by $L$ the associated line bundle to this linear series.
	
	We distinguish different cases depending on the value of $a_r$.
	
	If $a_r = r+1$, the unique choice of the vanishing sequence $a$ is 
	\[ a = (0,\ldots, r-4, r-2, r-1, r, r+1)\]
	This is not possible. We have the contradiction
	\[ 3 = h^0\textrm{\Large(}C, L\textrm{\large(}-(r-2)\cdot p\textrm{\large)}\textrm{\Large)} \geq 4 \]
	If $a_r = r+2$, we have two possible choices of $a$. The vanishing sequence is either $(0,\ldots, r-3, r-1, r, r+2)$ or $(0,\ldots, r-2, r+1, r+2)$. In particular, there exists $x\in C$ such that $L = \OO_C\textrm{\large(}(r+2)\cdot p+x\textrm{\large)}$. 
    
    When $a = (0,\ldots, r-3, r-1, r, r+2)$, we have 
    \[ h^0\textrm{\Large(}C, L\textrm{\large(}-(r-1)\cdot p\textrm{\large)}\textrm{\Large)} = h^0\textrm{\large(}C, \OO_C(3\cdot p + x)\textrm{\large)} \geq 3.\]
    Hence $3\cdot p + x \equiv K_C$. Using that $8\cdot p \equiv 2K_C$, we get $2\cdot p\equiv 2\cdot x$. This is impossible since $C$ is not hyperelliptic, see \cite[Proposition 3.3]{Budstrata}. 
    
    When $a = (0,\ldots, r-2, r+1, r+2)$, the condition 
    \[ h^0\textrm{\Large(}C, L\textrm{\large(}-(r+1)\cdot p\textrm{\large)}\textrm{\Large)} \geq 2 \]
    implies that $C$ is hyperelliptic, which we know is false. 
    
    If $a_r = r+3$, we have a unique choice for the vanishing sequence. We have 
    \[ a = (0,\ldots, r-2, r, r+3).\]
    This implies 
    \[ h^0\textrm{\large(}C, L(-r\cdot p)\textrm{\large)} = h^0\textrm{\large(}C, \OO_C(3\cdot p)\textrm{\large)} \geq 2.\] 
    By Riemann-Roch we have 
    \[ h^0\textrm{\large(}C, \omega_C(-3p)\textrm{\large)} \geq 1.\]
    Hence there exists $x\in C$ such that $3p+x\equiv K_C$. We obtain the same contradiction $2\cdot p\equiv 2\cdot x$ as before.
    
    \textbf{Step 3:} We assume the theorem holds for genus $g-1 \geq 3$ and prove it for genus $g$. We consider $[E,p,q]$ a marked elliptic curve satisfying that $p-q$ is exactly $(4g-4)$-torsion. Then we have a morphism 
    \[ \mathcal{Q}_{g-1}(4g-8) \rightarrow \mathcal{Q}_g(4g-4) \]
    sending a curve $[X,y]$ to $[X\cup_{y\sim q}E,p]$. This morphism is well-defined, see \cite[Theorem 1.5]{Daweik-diffcomp}. Because $a_r \leq g+r \leq 2g-1 < 4g-4$ and $p-q$ is exactly $(4g-4)$-torsion, the reasoning used in the proof of Theorem \ref{mainodd} can be used to reach the conclusion.  
\end{proof} 

The same is true for projectivized strata of meromorphic differentials with at least two negative entries.  
\begin{trm}
	Let $g\geq 0$, $r\geq 1$, $m_1, m_2 \geq 1$ natural numbers and $ a^j = (0\leq a^j_0<a^j_1<\cdots a^j_r \leq g+r)$ for $1\leq j \leq 3$ vanishing sequences satisfying 
	\[ \rho(g,r,g+r, a^1, a^2, a^3)\coloneqq g - \sum_{j=1}^{3}\sum_{i=0}^{r}(a^j_i-i) \leq -1.\]
	Then, if $[C,p_1,p_2,p_3]$ is generic in the stratum $\mathcal{H}^{\textrm{odd}}_g(2g-2+m_1+m_2, -m_1, -m_2)$, it does not admit a $g^r_{g+r}$ with prescribed vanishings $a^j$ at $p_j$ for $1\leq j \leq 3$. 
\end{trm}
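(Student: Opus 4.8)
The plan is to adapt the three-step structure used for the minimal Abelian stratum (Theorem \ref{mainodd}) to the meromorphic case, reducing everything to a genus induction anchored by the quadratic and Abelian computations already established. Since the pointed Brill-Noether number now involves three vanishing sequences $a^1, a^2, a^3$ distributed over the marked points $p_1, p_2, p_3$, I would first reduce to the extremal case $\rho(g,r,g+r,a^1,a^2,a^3) = -1$, and normalize the sequences: whenever some $a^j_0 = 0$, replacing $L$ by $L(-p_j)$ produces a $g^{r-1}_{g+r-1}$ with the shifted sequences, so by induction on $r$ we may assume $a^j_0 \geq 1$ for all $j$. This simultaneously handles the analogue of Step 3 from the Abelian proof and lets us control the top vanishing orders $a^j_r$.

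The main work is the genus induction (the analogue of Step 2). For the inductive step I would attach a marked elliptic tail: take $[E, p_1, q]$ with $p_1 - q$ exactly $(2g-2+m_1+m_2)$-torsion, and use the clutching morphism
\[
\mathcal{H}^{\textrm{odd}}_{g-1}(2g-4+m_1+m_2, -m_1, -m_2) \rightarrow \overline{\mathcal{H}}^{\textrm{odd}}_g(2g-2+m_1+m_2, -m_1, -m_2)
\]
sending $[X, y, p_2, p_3]$ to $[X \cup_{y \sim q} E, p_1, p_2, p_3]$, well-definedness of which should follow from the compactification results of \cite{DaweiAbComp} and \cite{Daweik-diffcomp} cited earlier. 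A putative $g^r_{g+r}$ on the generic genus-$g$ curve degenerates to a limit linear series with aspects $l_X$ and $l_E$ satisfying the gluing inequalities $(\star)$ at $y$. Because the torsion order $2g-2+m_1+m_2$ strictly exceeds $g+r$ (using $a^j_r \leq g+r \leq 2g-1$), the elliptic aspect $l_E$ is forced: equality $a^1_i + b_{r-i} = g+r$ can hold for at most one index $i$, so the compatibility at $q$ (where the vanishing at $p_1$ must interact with the torsion condition $p_1 - q$) pins down the $b_{r-i}$ up to the same one-index slack seen in Step 2. Transporting this through $(\star)$ yields vanishing sequences $c_j \geq a^1_j + 1 - \delta^i_j$ at $y$, producing on $[X, y, p_2, p_3]$ a linear series violating the genus-$(g-1)$ statement for the shifted data, which is the contradiction we want.

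The base case is where the two prior theorems enter. For fixed $g$ the induction on $r$ bottoms out at small $r$, and the genus induction must be anchored: I expect the anchor to be a direct computation analogous to Step 2 of Theorem \ref{mainquad}, carried out on the generic curve underlying the minimal meromorphic stratum, ruling out the finitely many admissible sequences $a^1, a^2, a^3$ with $a^j_0 \geq 1$ and small genus by Riemann-Roch together with the non-hyperellipticity of the generic stratum element. The crucial arithmetic input throughout is that the relevant torsion order dominates all vanishing orders, which is exactly what lets the elliptic-tail argument localize the slack to a single index.

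The step I expect to be the genuine obstacle is the bookkeeping of the gluing inequalities $(\star)$ when the Brill-Noether condition is spread across \emph{three} marked points rather than one: the total deficiency $-1$ must be apportioned among $a^1, a^2, a^3$, and since the elliptic tail only carries the point $p_1$ (the torsion condition is relative to $q \sim y$), I need to verify that the vanishing contributions at $p_2, p_3$ simply ride along unchanged through the degeneration while only the $p_1$-sequence $a^1$ interacts with the torsion. Making precise that the compactified stratum $\overline{\mathcal{H}}^{\textrm{odd}}_g(2g-2+m_1+m_2,-m_1,-m_2)$ genuinely contains the clutched curve — i.e. that the twisted-differential / multi-scale compatibility at the node is satisfied for this particular torsion choice, with the spin parity matching the "odd" label — is where the cited compactification theory must be invoked carefully, and I anticipate this verification, rather than the linear-series combinatorics, to be the delicate point.
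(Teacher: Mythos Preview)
Your inductive scheme is not the route the paper takes, and the paper's argument is both different and considerably shorter. Rather than inducting on $g$ via a single elliptic tail, the paper exhibits in one step a flag-curve degeneration of the generic element of $\mathcal{H}^{\textrm{odd}}_g(2g-2+m_1+m_2,-m_1,-m_2)$: a rational spine $[\mathbb{P}^1,p_1,p_2,p_3,q_1,\ldots,q_g]$ lying in $\mathcal{H}_0(2g-2+m_1+m_2,-m_1,-m_2,-2,\ldots,-2)$, chosen so that the residues at all $q_i$ vanish (such a configuration exists by a residue theorem of Tahar), with an elliptic curve $E_i$ glued at each $q_i$. The residue condition is precisely what is needed for this curve to lie in the boundary of the stratum via \cite{DaweiAbComp}. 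A putative limit $g^r_{g+r}$ then has aspects $L_0$ on $\mathbb{P}^1$ and $L_k$ on each $E_k$, and Brill-Noether additivity gives
\[
-1 \ \geq\ \rho(g,r,g+r,a^1,a^2,a^3)\ \geq\ \rho(L_0;p_1,p_2,p_3,q_1,\ldots,q_g)\ +\ \sum_{k=1}^g \rho(L_k;q_k).
\]
Every summand on the right is nonnegative by the classical fact that pointed Brill-Noether numbers are nonnegative on $\mathbb{P}^1$ (with any number of marked points) and on an elliptic curve with one marked point. That is the whole proof: no induction, no base-case computation, no tracking of which marking carries the deficiency.

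Your approach is not obviously wrong, but as written it has gaps. The torsion inequality you invoke, $2g-2+m_1+m_2 > g+r$, is not a consequence of the stated hypotheses $g\geq 0$, $r\geq 1$, $m_i\geq 1$; it is equivalent to $g>r$ when $m_1=m_2=1$. You can recover it only \emph{after} performing the $r$-reduction to $a^j_0\geq 1$ for all $j$, since then $\rho=-1$ forces $3(r+1)\leq g+1$ and hence $r<g$; but you present the $r$-reduction and the $g$-induction as separate steps rather than the nested induction they would have to be. Your base case is also left vague: unwinding your scheme, the anchor is exactly the $g=0$ statement that $\rho\geq 0$ for any pointed linear series on $\mathbb{P}^1$, which is the single classical input the paper uses directly. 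Recognizing that this fact, together with its elliptic analogue, already suffices is what collapses your entire induction into the paper's one-line additivity argument.
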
 
\begin{proof}
	Let $[\mathbb{P}^1, p_1,p_2,p_3, q_1,\ldots, q_g] \in \mathcal{H}_0(2g-2+m_1+m_2, -m_1,-m_2, -2,\ldots, -2)$ and let $\varphi$ the associated meromorphic differential (unique up to multiplication with a constant). We choose 
	\[[\mathbb{P}^1, p_1,p_2,p_3, q_1,\ldots, q_g] \in \mathcal{H}_0(2g-2+m_1+m_2, -m_1,-m_2, -2,\ldots, -2)\] 
	so that we have $\textrm{Res}_{q_i}(\varphi) = 0$ for all $1\leq i \leq g$. Th\'eor\`eme 1.2 of \cite{Taharabresidues} guarantees the existence of such a choice. 
	
	We glue an elliptic curve $E_i$ to each of the points $q_i$. We denote the curve obtained in this way by $[X,p_1,p_2,p_3]$ We have  $[X,p_1,p_2,p_3] \in \overline{\mathcal{H}}^{\textrm{odd}}_g(2g-2+m_1+m_2, -m_1, -m_2)$, see \cite[Theorem 1.3]{DaweiAbComp}. 
	
	Finally, we assume the theorem does not hold for some choice of $r$ and $a^1, a^2, a^3$. In particular, this implies that $[X,p_1,p_2,p_3]$ admits a limit $g^r_{g+r}$ with prescribed vanishing $a^j$ at $p_j$ for $1\leq j \leq 3$. We denote by $L_0$ the $\mathbb{P}^1$ aspect of this limit linear series, and similarly we denote $L_i$ for the $E_i$-aspect. 
	
	Brill-Noether additivity implies 
	\[ -1 \geq \rho(g,r,g+r, a^1, a^2, a^3) \geq \rho(L_0, p_1,p_2,p_3, q_1,\ldots, q_g) + \sum_{k=1}^g \rho(L_k, q_k) \]
	It is a classical result that all terms on the right-hand side are greater or equal to 0, see \cite[Theorem 1.1]{EisenbudHarrisg>23}. This concludes our proof by contradiction. 
\end{proof}

\section{Positive pointed Brill-Noether number}

We are now interested in considering the case when the pointed Brill-Noether number $\rho(g,r,g+r,a)$ is non-negative. In this situation, this number is the expected dimension of $W^r_{g+r,a}(C,p)$. We show that for $[C,p]$ generic in either $\mathcal{H}^{\textrm{odd}}_g(2g-2)$ or $\mathcal{H}^{\textrm{even}}_g(2g-2)$, all components of $W^r_{g+r,a}(C,p)$ are of the expected dimension. The same method as in the proof of Theorem \ref{mainodd} can be used with slight modifications to prove these results. 
 
\begin{trm} \label{posmainodd}
	Let $r\geq 0$, $g\geq 3$ natural numbers and $ a = (0\leq a_0<a_1<\cdots a_r \leq g+r)$ a vanishing sequence satisfying 
	\[ \rho(g,r,g+r, a)\coloneqq g - \sum_{i=0}^{r}(a_i-i) \geq 0.\]
For $[C,p]$ a generic element in the stratum $\mathcal{H}_g^{\textrm{odd}}(2g-2)$, we have that all components of the pointed Brill-Noether locus $W^r_{g+r, a}(C,p)$ are of expected dimension $ \rho(g,r,g+r, a)$. 
\end{trm}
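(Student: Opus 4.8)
The statement is the positive-$\rho$ analogue of Theorem \ref{mainodd}, so my strategy is to mirror the degeneration-and-induction scheme of that proof, now tracking dimensions of Brill-Noether loci rather than merely their emptiness. The key principle is that for any pointed curve, every component of $W^r_{g+r,a}(C,p)$ has dimension at least $\rho(g,r,g+r,a)$ (this is the pointed analogue of the classical lower bound, an excess-intersection statement). So the content is entirely the upper bound: I must show $\dim W^r_{g+r,a}(C,p) \le \rho$ for generic $[C,p]$ in the odd minimal stratum. The natural device is a dimension count on the same elliptic-tail degeneration used before.

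First I would set up the degeneration. Take $[E,p,q]$ a marked elliptic curve with $p-q$ exactly $(2g-2)$-torsion and consider the clutching map $\mathcal{H}_{g-1}^{\textrm{odd}}(2g-4)\rightarrow \overline{\mathcal{H}}_g^{\textrm{odd}}(2g-2)$ sending $[X,y]$ to $[X\cup_{y\sim q}E,p]$, well-defined by \cite[Theorem 1.3]{DaweiAbComp}. By upper-semicontinuity of fiber dimension, it suffices to bound the dimension of the locus of limit $g^r_{g+r}$'s with vanishing $a$ at $p$ on the nodal curve $X\cup_{y\sim q}E$, since a component of $W^r_{g+r,a}$ of too-large dimension on the generic smooth curve would specialize to a limit linear series family of at least that dimension on the boundary. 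The torsion hypothesis forces the compatibility $a_i+b_{r-i}\le g+r$ at the node with equality for at most one index $i$, exactly as in Step 2 of Theorem \ref{mainodd}; this pins down the vanishing sequence $c$ of the $X$-aspect at $y$ to be $a^i=(a_0+1-\delta^i_0,\ldots,a_r+1-\delta^i_r)$ for some $i$. The crucial numerical check is that the adjusted Brill-Noether number on $X$ drops by exactly one relative to $\rho$ when the node absorbs the extra ramification, so that the inductive hypothesis on $X$ (in genus $g-1$) provides a family of dimension $\le \rho-1$ of the relevant $g^r_{g+r}$'s on $X$; the $E$-aspect contributes the remaining one parameter, recovering the bound $\rho$ in genus $g$.

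The two structural steps from the earlier proof carry over: I would reduce to $a_0\ge 1$ by peeling off the base point $p$ (replacing the locus by a $g^{r-1}_{g+r-1}$ locus with shifted vanishing, which preserves dimensions), and I would treat the base case $g=3$, or small $r$, directly using Riemann-Roch as in Step 1. The one genuinely new ingredient, absent when one only proves emptiness, is that I must account for the dimension of the space of line bundles rather than its mere nonexistence; concretely, I need that the forgetful map from the space of limit linear series down to $\mathrm{Pic}^{g+r}(X)$ and the attachment data are flat enough that the naive additivity of Brill-Noether numbers translates into an honest additivity of dimensions. This is where the argument is least automatic.

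I expect the main obstacle to be precisely this bookkeeping of dimensions through the degeneration. In the emptiness proof one only needed that the summands of the additivity inequality are nonnegative; here I must instead show that the dimension of the limit linear series scheme is bounded above by the sum of the expected dimensions of the two aspects, which requires either a transversality/dimension-theoretic input about the moduli of limit linear series on $X\cup_{y\sim q}E$ or a direct argument that the fiber of the clutching map over a point of $\mathcal{H}_{g-1}^{\textrm{odd}}(2g-4)$ contributes exactly the expected one-dimensional family of $E$-aspects. Handling the at-most-one-index-with-equality case carefully — so that no component of excess dimension is hidden in the choice of $i$ and in the elliptic aspect's vanishing — is the delicate point, and I would devote the bulk of the write-up to verifying that the upper bound $\rho$ is not violated by any of these finitely many branches.
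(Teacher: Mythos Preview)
Your proposal is correct and follows essentially the same route as the paper: the paper's proof also reduces to the base case $g=3$ (checked directly via the known description of $W^1_3(C)$ and $W^2_4(C)$ for a nonhyperelliptic curve) and then invokes the elliptic-tail degeneration $\mathcal{H}_{g-1}^{\textrm{odd}}(2g-4)\rightarrow \overline{\mathcal{H}}_g^{\textrm{odd}}(2g-2)$ to induct on $g$, noting that the inequality $a_r-a_0<2g-2$ guarantees the at-most-one-equality property you identified. The paper is terser and simply asserts that ``the same inductive procedure as in the proof of Theorem~\ref{mainodd} works,'' whereas you spell out the dimension bookkeeping and correctly flag the limit-linear-series dimension additivity as the step requiring care; but the underlying argument is the same.
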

\begin{proof} 
    As in the proof of Theorem \ref{mainodd}, we consider the map 
	\[\mathcal{H}_{g-1}^{\textrm{odd}}(2g-4) \rightarrow \overline{\mathcal{H}}_g^{\textrm{odd}}(2g-2).\]
    
      If $[C] \in \mathcal{M}_3$ is a nonhyperelliptic curve, then we have 
    \[ W^1_3(C) = \left\{K_C-q\right\}_{q\in C} \ \textrm{and} \ W^2_4(C) = \left\{K_C\right\} .\]
    Based on these facts, we can immediately check the theorem for $g= 3$. 
    
    We want to use this case as the induction base. Because we have $a_r - a_0 < 2g-2$, the same inductive procedure as in the proof of Theorem \ref{mainodd} works in our situation. 
	%\begin{enumerate}
	%	\item The case $r = 0$ is trivial: It states that when looking at the map
	%	\[ C_{g-a_0}\rightarrow \textrm{Pic}^g(C)\]
	%	sending a degree $g-a_0$ divisor $D$ to $\mathcal{O}_C(D+ a_0\cdot p)$, its image has dimension $g-a_0$.   
	%	\item We can also assume $a_0 > 0$. Indeed, if we denote $a' = 0 \leq a_1-1 < \cdots < a_r -1 \leq g+r-1$, we have a bijective morphism 
	%	\[ W^r_{g+r,a}(C,p) \rightarrow W^{r-1}_{g+r-1,a'}(C,p). \]
	%	By this method we can reduce $r$ by one, as long as the first entry in the vanishing sequence is $0$. In particular, this reduces to proving the theorem for $r \leq g- 1$ because otherwise the condition $ \rho(g,r,g+r, a)\geq 0$ forces $a_0 = 0$. 
	%	\item When $r = g-1$ and $a_0> 0$, the condition $\rho(g,r,g+r, a)\geq 0$ forces $a_i = i+1$ for every $i$, and hence what we need to prove is that $\dim(W^{g-1}_{2g-2}(C)) = 0$, which is obviously true.  
	%\end{enumerate}
%By the discussion above, we are left to prove the theorem for $1\leq r \leq g-2$ and $a_0 > 0$. It is trivial to check the theorem holds for $g = 3$, as a $g^1_3$ on a nonhyperelliptic genus $3$ curve $C$ is of the form $K_C- q$ for some $q\in C$. 

%Using the map 
%\[\mathcal{H}_{g-1}^{\textrm{odd}}(2g-4) \rightarrow \overline{\mathcal{H}}_g^{\textrm{odd}}(2g-2)\]
%appearing in the proof of Theorem \ref{mainodd}, the theorem immediately follows from an inductive argument. 
\end{proof}

Using again the morphism 
\[\mathcal{H}_{g-1}^{\textrm{odd}}(2g-4) \rightarrow \overline{\mathcal{H}}_g^{\textrm{even}}(2g-2)\]
appearing in Theorem \ref{maineven} we conclude that 
\begin{trm} \label{posmaineven}
	Let $r\geq 0$, $g\geq 4$ natural numbers and $ a = (0\leq a_0<a_1<\cdots a_r \leq g+r)$ a vanishing sequence satisfying 
	\[ \rho(g,r,g+r, a)\coloneqq g - \sum_{i=0}^{r}(a_i-i) \geq 0.\]
For $[C,p]$ a generic element in the stratum $\mathcal{H}_g^{\textrm{even}}(2g-2)$, we have that all components of the pointed Brill-Noether locus $W^r_{g+r,a}(C,p)$ are of expected dimension $\rho(g,r,g+r,a)$. 
\end{trm}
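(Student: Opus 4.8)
The expected-dimension statement has two halves. The lower bound is automatic: $W^r_{g+r,a}(C,p)$ is cut out in $\textrm{Pic}^{g+r}(C)$ by the determinantal conditions defining its points, so every one of its nonempty components has dimension at least $\rho(g,r,g+r,a)$. All the content therefore lies in the reverse inequality, and for this I would reuse the degeneration that deduces Theorem \ref{maineven} from Theorem \ref{mainodd}. Fix a marked elliptic curve $[E,p,q]$ with $p-q$ exactly $(g-1)$-torsion together with the clutching morphism $\mathcal{H}_{g-1}^{\textrm{odd}}(2g-4)\to\overline{\mathcal{H}}_g^{\textrm{even}}(2g-2)$, $[X,y]\mapsto[X\cup_{y\sim q}E,p]$, with $[X,y]$ generic in the source. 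Since the generic point of $\mathcal{H}_g^{\textrm{even}}(2g-2)$ lies in the image, upper-semicontinuity of fibre dimension bounds $\dim W^r_{g+r,a}(C,p)$ by the dimension of the scheme of limit $g^r_{g+r}$ on $C_0=X\cup_{y\sim q}E$ with vanishing $a$ at $p$; it suffices to bound the latter by $\rho(g,r,g+r,a)$.

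I would then stratify this scheme by the vanishing sequence $(b_0,\dots,b_r)$ of the $E$-aspect at $q$ and $(c_0,\dots,c_r)$ of the $X$-aspect at $y$, subject to the gluing inequalities $c_j+b_{r-j}\geq g+r$. On each stratum the $X$-aspects sweep out $G^r_{g+r}(X,(y,c))$; subtracting the base divisor $y$ identifies this (when $c_0\geq 1$) with the diagonal pointed Brill-Noether locus $W^r_{(g-1)+r,\,c-1}(X,y)$, whose dimension is at most $\rho(g-1,r,(g-1)+r,c-1)$ by Theorem \ref{posmainodd} in genus $g-1$ on the odd stratum (valid since $g-1\geq 3$). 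The borderline case $c_0=0$ forces $a_0=0$, and twisting down by $y$ lands in a pointed locus of strictly smaller expected dimension, so it poses no threat. Because subtracting $y$ preserves the Brill-Noether number, the $X$-aspect bound is exactly $\rho(g,r,g+r,a)$ once every gluing inequality is strict, and it is the coupling with the $E$-aspect, through Brill-Noether additivity on $C_0$, that must absorb any slack.

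The main obstacle is precisely the elliptic aspect at the strata where several of the gluing inequalities are simultaneously tight, i.e. where $a_i+b_{r-i}=g+r$ for more than one index $i$. For such indices the $E$-aspect forces $a_ip+b_{r-i}q\equiv a_{i'}p+b_{r-i'}q$ on $E$, hence $(g-1)\mid(a_i-a_{i'})$; in Theorem \ref{maineven} the hypothesis $a_r\leq g-2$ rules this out and leaves at most one tight index, whereas here no bound on $a_r$ is imposed and two tight indices differing by exactly $g-1$ can occur. The heart of the proof is to show that each tight index beyond the first constrains the fixed-line-bundle elliptic aspect along an additional Schubert condition, lowering the dimension of the $E$-aspect family by exactly the amount by which the $X$-aspect bound exceeds $\rho(g,r,g+r,a)$; the exact torsion order $g-1$ of $p-q$ is what makes this bookkeeping tight. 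Granting this, each stratum contributes dimension at most $\rho(g,r,g+r,a)$, and summing over the finitely many strata, together with the automatic lower bound, yields that every component of $W^r_{g+r,a}(C,p)$ has dimension exactly $\rho(g,r,g+r,a)$.
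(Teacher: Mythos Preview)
Your degeneration to $X\cup_{y\sim q}E$ with $p-q$ of exact order $g-1$, and your reduction of the $X$-aspect to Theorem~\ref{posmainodd} in genus $g-1$, are exactly what the paper does in the main case $a_r-a_0<g-1$; there at most one gluing inequality $a_i+b_{r-i}=g+r$ can be tight, and the stratum-by-stratum count you outline goes through with no further work. Where you diverge from the paper is in the residual range $a_r-a_0\ge g-1$. You propose to push the same stratified dimension count through, absorbing each extra tight index into a further Schubert condition on the $E$-aspect. The paper takes a much shorter path: once $a_0\ge 1$, the inequalities $\rho(g,r,g+r,a)\ge 0$ and $a_r\ge a_0+g-1$ force the single sequence $a=(1,2,\dots,r,g)$ with $\rho=0$, and for that sequence one verifies directly that
\[
W^r_{g+r,(1,\dots,r,g)}(C,p)=\{\mathcal O_C((g+r)p)\}
\]
on a generic $[C,p]\in\mathcal H_g^{\mathrm{even}}(2g-2)$. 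No elliptic bookkeeping is needed at all.

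Your alternative route is plausible, but the step you yourself flag as ``the heart of the proof'' is only asserted, not proved: you need that when $L_E$ is pinned to one of the finitely many torsion-constrained points of $\operatorname{Pic}^{g+r}(E)$ (those with $L_E\cong\mathcal O_E(a_ip+(g+r-a_i)q)$), the two osculating flags at $p$ and $q$ in $H^0(E,L_E)$ still meet every Schubert pair in the expected codimension inside $\mathrm{Gr}(r{+}1,H^0(E,L_E))$. This transversality is not automatic when $p-q$ has small torsion order relative to the degree, and you have not supplied the verification. The paper's combinatorial shortcut sidesteps this entirely. (A minor slip: the generic point of $\mathcal H_g^{\mathrm{even}}(2g-2)$ does not lie in the image of the clutching map---the image sits in the boundary---so the semicontinuity runs from the nodal curve back to the smooth generic one, not the other way round.)
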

\begin{proof}
    Let $[E,p,q]$ be a marked elliptic curve satisfying that $p-q$ is exactly $(g-1)$-torsion. We consider the morphism 
    \[\mathcal{H}_{g-1}^{\textrm{odd}}(2g-4) \rightarrow \overline{\mathcal{H}}_g^{\textrm{even}}(2g-2)\]
    sending a curve $[X,y]$ to $[X\cup_{y\sim q}E,p]$. 
    
    If $a_r - a_0 < g-1$, then our result is simply a consequence of the previous theorem. 
    The only situation when $a_0 > 0$ and $a_r \geq g-1 + a_0$ is when $a_i = i + 1$ for $0\leq i\leq r-1$ and $a_r = g$. But in this case, we can check that 
    \[  W^r_{g+r,(1,2,\ldots, r, g)}(C,p) = \left\{ \OO_C((g+r)\cdot p)\right\}\]
    This has expected dimension $0$, thus concluding the proof.
\end{proof}

As a consequence, many pointed Brill-Noether loci will be of expected dimension. In particular, we have 
\begin{cor} \label{ab-expected-dim} Let $g\geq 3$ and $\mu = (m_1,m_2,\ldots, m_n)$ a positive partition of $2g-2$. Let $[C,p_1,p_2,\ldots,p_n]$ be a generic element of a nonhyperelliptic component of $\mathcal{H}_g(\mu)$. Then for every positive integers $r, d$ with $ d \leq g+r-1$ the Brill-Noether locus 
\[ W^r_d(C) \coloneqq \left\{L\in \textrm{Pic}^d(C) \ | \ h^0(C,L)\geq r+1\right\} \]
is of expected dimension $\rho(g,r,d) \coloneqq g-(r+1)(g-d+r)$.
\end{cor}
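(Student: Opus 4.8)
The plan is to deduce the unmarked statement from the pointed results of Theorems \ref{posmainodd} and \ref{posmaineven} by a twisting argument, after first reducing to the minimal stratum.

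First I would reduce to the case where $C$ is the underlying curve of a generic $[C,p]$ in $\mathcal{H}_g^{\mathrm{odd}}(2g-2)$ or $\mathcal{H}_g^{\mathrm{even}}(2g-2)$. Via the clutching morphism $\mathcal{H}_g(2g-2) \to \overline{\mathcal{H}}_g(\mu)$, $[C,p]\mapsto [C\cup_p\mathbb{P}^1, p_1,\ldots,p_n]$, each nonhyperelliptic component of $\mathcal{H}_g(\mu)$ contains in its boundary the image of a generic element of a minimal-stratum component of the same spin parity (when defined); since the rational tail carries no moduli, the unmarked locus $W^r_d$ of the genus-$g$ aspect is unaffected. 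Thus a generic $[C']\in\mathcal{H}_g(\mu)$ degenerates to such a $C$, and upper semicontinuity of $\dim W^r_d$ in families gives $\dim W^r_d(C')\le \dim W^r_d(C)$. Combined with the classical lower bound that every nonempty component of $W^r_d$ has dimension at least $\rho(g,r,d)$, it suffices to prove that for the minimal-stratum curve $C$ every component of $W^r_d(C)$ has dimension exactly $\rho(g,r,d)$.

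The key step is a twisting identity. For any point $p$ one has $W^r_d(C) = W^r_{d,(0,1,\ldots,r)}(C,p)$, since the conditions $h^0(L(-i\cdot p))\ge r+1-i$ are all implied by $h^0(L)\ge r+1$. Setting $k \coloneqq g+r-d\ge 1$, tensoring by $\OO_C(kp)$ is an isomorphism $\mathrm{Pic}^d(C)\xrightarrow{\sim}\mathrm{Pic}^{g+r}(C)$ carrying $W^r_d(C)$ isomorphically onto $W^r_{g+r,(k,k+1,\ldots,k+r)}(C,p)$, because multiplication by the section of $\OO_C(kp)$ with divisor $kp$ raises every vanishing order at $p$ by $k$. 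As this is an isomorphism of schemes, it matches components and preserves dimensions. Taking $[C,p]$ generic in the appropriate minimal-stratum component, Theorem \ref{posmainodd} (resp. \ref{posmaineven}) applies to the sequence $a'=(k,\ldots,k+r)$ — whose top entry $a'_r = k+r\le g+r$ precisely when $d\ge r$, which is automatic once $\rho(g,r,d)\ge 0$ — and shows every component of $W^r_{g+r,a'}(C,p)$ has dimension $\rho(g,r,g+r,a')$. A direct computation gives $\rho(g,r,g+r,a') = g-(r+1)k = \rho(g,r,d)$, which settles the case $\rho(g,r,d)\ge 0$.

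It remains to treat $\rho(g,r,d)<0$, where the claim is that $W^r_d(C)$ is empty. When $d<r$ this is immediate from $h^0(L)\le \deg L + 1$; the remaining cases follow from the same twist together with Theorem \ref{mainodd} (resp. \ref{maineven}) and Clifford's theorem. I expect the main subtlety to lie not in the twisting, which is formal, but in the descent from the minimal stratum to $\mathcal{H}_g(\mu)$: one must make the specialization across the nodal, rational-tailed degeneration precise, so that the limit of the relative Brill-Noether locus is governed by the genus-$g$ aspect alone. This is exactly where the Brill-Noether generality of $[\mathbb{P}^1,p,p_1,\ldots,p_n]$, noted in Section \ref{s:prym}'s preceding discussion, enters.
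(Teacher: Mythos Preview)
Your approach is exactly the one the paper has in mind (the corollary is stated without proof as a direct consequence of Theorems~\ref{posmainodd} and~\ref{posmaineven}): reduce to the minimal stratum via the clutching, then identify $W^r_d(C)$ with $W^r_{g+r,(k,\dots,k+r)}(C,p)$ for $k=g+r-d$ and apply the pointed theorems. Two small remarks are worth making.

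First, the descent from $\mathcal{H}_g(\mu)$ to the minimal stratum is actually simpler than you suggest: once the markings are forgotten, the rational tail contracts and the stable model of $C\cup_p\mathbb{P}^1$ is just $C$, so $W^r_d$ of the generic fibre specializes to $W^r_d(C)$ with no limit-linear-series analysis needed on the $\mathbb{P}^1$ at all. The Brill--Noether generality of the rational tail is irrelevant here; it only matters for the \emph{pointed} statements.

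Second, in your treatment of $\rho<0$ for the even component there is a small edge case you skate over. After twisting you have $a'_r=g+2r-d$, and Theorem~\ref{maineven} as stated requires $a'_r\le g-2$, i.e.\ $d\ge 2r+2$; Clifford (for a non-hyperelliptic curve) disposes of $d\le 2r$, but $d=2r+1$ is not covered by either. This is not a real obstacle: the proof of Theorem~\ref{maineven} only uses that no difference $a_i-a_j$ is divisible by the torsion order $g-1$, and for your sequence $a'$ one has $a'_r-a'_0=r\le g-2<g-1$, so the same degeneration argument goes through verbatim. You should either invoke the proof rather than the statement of Theorem~\ref{maineven}, or run the elliptic-tail degeneration directly for this one case.
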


As before, we have a similar result for quadratic strata. In this situation, the problem reduces again to considering the genus $3$ case. We obtain 

\begin{trm} \label{posmainquad}
	Let $r\geq 0$, $g\geq 3$ natural numbers and $ a = (0\leq a_0<a_1<\cdots a_r \leq g+r)$ a vanishing sequence satisfying 
\[ \rho(g,r,g+r, a)\coloneqq g - \sum_{i=0}^{r}(a_i-i) \geq 0.\]
For $[C,p]$ a generic element in the stratum $\mathcal{Q}_g(4g-4)$, we have that all components of the pointed Brill-Noether locus $W^r_{g+r, a}(C,p)$ are of expected dimension $ \rho(g,r,g+r, a)$. 
\end{trm}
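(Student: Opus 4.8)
The plan is to mirror the three-step structure already used in the proofs of Theorem \ref{mainquad} and Theorem \ref{posmainodd}. Since we are now in the regime $\rho(g,r,g+r,a)\geq 0$, the goal is no longer emptiness but rather that every component of $W^r_{g+r,a}(C,p)$ has dimension exactly $\rho(g,r,g+r,a)$. Because the lower bound $\dim W^r_{g+r,a}(C,p)\geq \rho$ holds on any component by the general existence/dimension theory of pointed Brill-Noether loci, the entire content is the upper bound: no component can have dimension strictly larger than $\rho$. First I would reduce to the genus $3$ base case exactly as in Theorem \ref{posmainodd}, using the explicit description $W^1_3(C)=\{K_C-q\}_{q\in C}$ and $W^2_4(C)=\{K_C\}$ for a nonhyperelliptic $[C]\in\mathcal{M}_3$, which lets one verify the dimension statement by hand for all admissible $(r,a)$ when $g=3$, the same computations that already appear in Step 2 of Theorem \ref{mainquad}.

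Next I would run the induction on the genus using the clutching morphism
\[ \mathcal{Q}_{g-1}(4g-8)\rightarrow \mathcal{Q}_g(4g-4) \]
sending $[X,y]$ to $[X\cup_{y\sim q}E,p]$, where $[E,p,q]$ is a marked elliptic curve with $p-q$ exactly $(4g-4)$-torsion, which is well-defined by \cite[Theorem 1.5]{Daweik-diffcomp}. The key point, identical to Step 3 of Theorem \ref{mainquad}, is that $a_r\leq g+r\leq 2g-1<4g-4$, so the torsion order of $p-q$ exceeds every relevant vanishing order. This forces the limit-linear-series gluing condition $(\star)$ at the node to be an equality for at most one index, so the $X$-aspect of a limit $g^r_{g+r}$ on $X\cup_{y\sim q}E$ must carry vanishing at $y$ shifted upward by the sequences $a^i$ that appeared in Step 2 of Theorem \ref{mainodd}. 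Translating this into a family statement, an excess-dimensional component of $W^r_{g+r,a}(C,p)$ for generic $[C,p]\in\mathcal{Q}_g(4g-4)$ would, after specialization to the boundary, produce an excess-dimensional family of limit linear series on $X\cup_{y\sim q}E$, and hence an excess-dimensional pointed Brill-Noether locus $W^r_{g-1+r,a^i}(X,y)$ on a generic $[X,y]\in\mathcal{Q}_{g-1}(4g-8)$, contradicting the induction hypothesis.

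I would also need the analogue of Step 1 of Theorem \ref{mainquad}, reducing the range $r\geq g+1$ to the range $r\leq g$: when $r\geq g+1$ and $\rho=-1$ forced $a_0=0$ in the emptiness setting, here $\rho\geq 0$ with large $r$ again forces small $a_0$, and subtracting a base point at $p$ via $L\mapsto L(-p)$ identifies $W^r_{g+r,a}(C,p)$ with a pointed Brill-Noether locus for $r-1$ of the same expected dimension, so the dimension bound descends. Combining the dimension-preserving nature of this base-point reduction with the monotonicity of the gluing inequalities at the node, the three steps together give the upper bound on every component and hence the theorem.

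The main obstacle I anticipate is the passage from the pointwise non-existence argument to a genuine \emph{dimension} estimate for families: in Theorems \ref{mainodd}--\ref{mainquad} one only needed that a generic point fails to admit a certain linear series, whereas here one must bound the dimension of the locus of line bundles, i.e. control the relative dimension of the universal pointed Brill-Noether locus over the stratum and verify that the specialization to $X\cup_{y\sim q}E$ does not drop dimension. Concretely, one must argue that a hypothetical component of $W^r_{g+r,a}(C,p)$ of dimension $>\rho$ specializes to a family of limit linear series whose $X$-aspects sweep out a locus of dimension $>\rho(g-1,r,g-1+r,a^i)$ for some $i$; making this upper-semicontinuity-of-fiber-dimension argument rigorous, together with the standard additivity of adjusted Brill-Noether numbers across the node, is where the real care is required, even though each individual ingredient is by now routine.
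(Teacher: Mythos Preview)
Your proposal is correct and matches the paper's approach essentially verbatim: the paper gives no detailed proof of this theorem but simply remarks that ``the problem reduces again to considering the genus $3$ case,'' meaning the induction via the clutching $\mathcal{Q}_{g-1}(4g-8)\to\overline{\mathcal{Q}}_g(4g-4)$ with $(4g-4)$-torsion elliptic tail (Step~3 of Theorem~\ref{mainquad}) together with the explicit genus~$3$ verification using $W^1_3(C)=\{K_C-q\}_{q\in C}$ and $W^2_4(C)=\{K_C\}$ for nonhyperelliptic $C$ (as in Theorem~\ref{posmainodd}). Your identification of the one nontrivial point---that the argument must now control \emph{dimensions} of families of limit linear series rather than merely establish emptiness, via upper semicontinuity and Brill--Noether additivity across the node---is exactly the content the paper leaves implicit when it says ``the same inductive procedure works.''
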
 

As a consequence, many pointed Brill-Noether loci will be of expected dimension. In particular, we have 

\begin{cor} \label{quad-expected-dim} Let $g\geq 3$ and $\mu = (m_1,m_2,\ldots, m_n)$ a positive partition of $4g-4$. Let $[C,p_1,p_2,\ldots,p_n]$ be a generic element of a nonhyperelliptic component of $\mathcal{Q}_g(\mu)$. Then for every positive integers $r, d$ with $ d \leq g+r-1$ the Brill-Noether locus 
	\[ W^r_d(C) \coloneqq \left\{L\in \textrm{Pic}^d(C) \ | \ h^0(C,L)\geq r+1\right\} \]
	is of expected dimension $\rho(g,r,d) \coloneqq g-(r+1)(g-d+r)$.
\end{cor}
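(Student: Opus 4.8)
The plan is to deduce this unpointed statement from the pointed results Theorem \ref{mainquad} and Theorem \ref{posmainquad} for the minimal stratum, by means of an elementary base-point translation that turns an unpointed $g^r_d$ into a pointed $g^r_{g+r}$.

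\textbf{Reduction to the minimal stratum.} Since the locus $W^r_d(C)$ depends only on the curve $C$ and not on the marked points, I would first use the clutching morphism $\mathcal{Q}_g(4g-4)\rightarrow \overline{\mathcal{Q}}_g(\mu)$ sending $[C,p]$ to $[C\cup_p \mathbb{P}^1, p_1,\ldots,p_n]$. This exhibits the underlying curve of a generic element of $\mathcal{Q}_g(4g-4)$ as a degeneration of the underlying curve of a generic element of $\mathcal{Q}_g(\mu)$. By upper semicontinuity of the fiber dimension of the relative Brill-Noether locus (a $g^r_d$ on the smooth genus $g$ component degenerates to a limit $g^r_d$ on the clutched curve, and the rational tail $[\mathbb{P}^1,p,p_1,\ldots,p_n]$ is Brill-Noether general), it suffices to prove that $\dim W^r_d(C)=\rho(g,r,d)$, with the convention $W^r_d(C)=\emptyset$ when $\rho(g,r,d)<0$, for $[C,p]$ generic in $\mathcal{Q}_g(4g-4)$.

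\textbf{The translation.} We may assume $r\leq d$, since otherwise $h^0(C,L)\leq d+1\leq r$ forces $W^r_d(C)=\emptyset$ and one checks $\rho(g,r,d)<0$. Set $a=(a_0,\ldots,a_r)$ with $a_i=(g+r-d)+i$. This is an admissible vanishing sequence: it is strictly increasing, $a_0=g+r-d\geq 0$, and $a_r=g+2r-d\leq g+r$ using $r\leq d$. A direct computation gives $\rho(g,r,g+r,a)=g-\sum_{i=0}^r(a_i-i)=g-(r+1)(g-d+r)=\rho(g,r,d)$. I claim the translation $L\mapsto L\text{\large(}(g+r-d)p\text{\large)}$ defines an injection $W^r_d(C)\hookrightarrow W^r_{g+r,a}(C,p)$ at the special point $p$. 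Indeed, for $L\in W^r_d(C)$ and each $0\leq i\leq r$ we have $L\text{\large(}(g+r-d)p\text{\large)}(-a_ip)=L(-ip)$, and since removing a single point drops $h^0$ by at most one, $h^0(C,L(-ip))\geq h^0(C,L)-i\geq (r+1)-i$; thus $L\text{\large(}(g+r-d)p\text{\large)}$ satisfies the defining inequalities of $W^r_{g+r,a}(C,p)$. Injectivity is clear since the map is translation by a fixed divisor.

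\textbf{Conclusion.} If $\rho(g,r,d)=\rho(g,r,g+r,a)<0$, then Theorem \ref{mainquad} gives $W^r_{g+r,a}(C,p)=\emptyset$, whence $W^r_d(C)=\emptyset$ by the injection. If $\rho(g,r,d)\geq 0$, then Theorem \ref{posmainquad} gives $\dim W^r_{g+r,a}(C,p)=\rho(g,r,d)$, so the injection yields $\dim W^r_d(C)\leq \rho(g,r,d)$; combined with the existence theorem of Kempf and Kleiman--Laksov, which guarantees that $W^r_d(C)$ is nonempty with every component of dimension at least $\rho(g,r,d)$, this forces equality. I expect the only delicate point to be the semicontinuity reduction to the minimal stratum: one must ensure that the generic curve of $\mathcal{Q}_g(\mu)$ is no less Brill-Noether general than that of $\mathcal{Q}_g(4g-4)$ across the clutched boundary, using that the limit linear series on the compact-type degeneration have controlled dimension and that the rational tail imposes no extra conditions. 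The translation argument at the core is entirely elementary.
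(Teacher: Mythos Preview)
Your argument is correct and is precisely the reasoning the paper leaves implicit: reduce to the minimal stratum via the clutching $\mathcal{Q}_g(4g-4)\to\overline{\mathcal{Q}}_g(\mu)$, then translate by $(g+r-d)p$ to identify $W^r_d(C)$ with $W^r_{g+r,a}(C,p)$ for $a_i=(g+r-d)+i$, so that Theorems~\ref{mainquad} and~\ref{posmainquad} apply directly. One minor simplification: your injection is actually a bijection, since for $M\in W^r_{g+r,a}(C,p)$ the single condition $h^0(C,M(-a_0 p))\geq r+1$ already gives $M(-(g+r-d)p)\in W^r_d(C)$; hence the appeal to Kempf and Kleiman--Laksov for the lower bound is not strictly needed.
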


\section{Maximal Brill-Noether loci}

We are looking at the Brill-Noether loci 
\[ \cM^r_{g,d} \coloneqq \left\{ [C]\in \cM_g \ | \ C \ \textrm{admits a} \ g^r_d\right\} \]
satisfying $-r-1 \leq \rho(g,r,d) \leq -1$. The Auel-Haburcak Conjecture says that, except for some low genus cases for $g = 7,8,9$, these Brill-Noether loci are maximal with respect to containments of Brill-Noether loci. 

In this section, we study the Auel-Haburcak Conjecture using strata of differentials. For this, we want to understand what are the higher codimension Brill-Noether loci containing $\mathcal{H}_g^{\textrm{odd}}(2g-2)$. 

\begin{prop} \label{highercodim}
	Let $r\geq 1$, $g\geq r+2$ natural numbers and $ a = (0\leq a_0<a_1<\cdots a_r \leq g+r)$ a vanishing sequence satisfying 
	\[ \rho(g,r,g+r, a)\coloneqq g - \sum_{i=0}^{r}(a_i-i) \leq -k\]
	and $a_r\leq g + k-1$. Then, if $[C,p]$ is generic in the stratum $\mathcal{H}_g^{\textrm{odd}}(2g-2)$ it does not admit a $g^r_{g+r}$ with prescribed vanishing $a$ at $p$. 
\end{prop}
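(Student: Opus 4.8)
The plan is to run the same inductive machine that proves Theorem \ref{mainodd} (which is exactly the case $k=1$), now carrying the codimension parameter $k$ through every stage. As a first reduction I would pass to the equality case $\rho(g,r,g+r,a)=-k$: any strictly increasing $a$ with $\rho\leq -k$ entrywise dominates a strictly increasing $a'\leq a$ with $\rho(a')=-k$, and since a $g^r_{g+r}$ with vanishing $a$ automatically has vanishing $a'$ (because $h^0(L(-a'_ip))\geq h^0(L(-a_ip))\geq r+1-i$), it suffices to exclude $a'$; crucially $a'_r\leq a_r\leq g+k-1$ is preserved. I would then induct on the genus $g$ (outer) and, within each genus, on $r$ (inner), using at each step the defining relation $\omega_C\cong\OO_C((2g-2)p)$ of the odd minimal stratum to supply the arithmetic.

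The inner induction rests on two reduction moves. If $a_0=0$, then $h^0(L(-p))\geq r+1$, so $L(-p)$ is a $g^{r-1}_{g+r-1}$ with vanishing $(a_1-1,\dots,a_r-1)$; this keeps $\rho=-k$, lowers $r$, and keeps the top vanishing $\leq g+k-2$, so the induction on $r$ closes. If $a_0\geq 1$ and $a_r\leq g+k-2$, I would attach an elliptic tail through the clutching map $\mathcal{H}_{g-1}^{\textrm{odd}}(2g-4)\rightarrow\overline{\mathcal{H}}_g^{\textrm{odd}}(2g-2)$ with $p-q$ exactly $(2g-2)$-torsion, exactly as in Step 2 of Theorem \ref{mainodd}. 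The compatibility inequality $(\star)$ forces the vanishing $c_j$ on the genus $g-1$ aspect to satisfy $c_j\geq a_j+1$ except at the indices where $a_i+b_{r-i}=g+r$; after twisting this aspect down by $y$ one obtains a $g^r_{(g-1)+r}$ on the genus $g-1$ curve whose parameter is $k+1-t$, where $t$ is the number of node-equalities. Since the torsion argument shows two equalities force $2g-2\mid a_i-a_{i'}$ and the spread $a_r-a_0\leq g+k-2$ stays below $2g-2$, one has $t\leq 1$, so the descended problem has parameter $k+1$ or $k$ and again satisfies the hypotheses; the genus induction then closes.

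The one case not covered by these moves is the extremal one, $a_0\geq 1$ and $a_r=g+k-1$, and I would dispose of it by a direct computation. Here $\rho=-k$ forces the single sequence $a=(1,2,\dots,r-1,r+1,g+k-1)$. From $a_0=1$ we get $h^0(L(-p))\geq r+1$, and from $a_r=g+k-1$ we may write $L\cong\OO_C((g+k-1)p+D)$ with $D$ effective of degree $r-k+1$. Applying Riemann–Roch to $L(-p)$ and using $\omega_C\cong\OO_C((2g-2)p)$ yields $h^0(\OO_C((g-k)p-D))\geq 1$, which pins down $D=(r-k+1)p$ and hence $L\cong\OO_C((g+r)p)$. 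The vanishing condition $a_{r-1}=r+1$ then gives $h^0(L(-(r+1)p))=h^0(\OO_C((g-1)p))\geq 2$, contradicting the stratum identity $h^0(\OO_C((g-1)p))=1$ (equivalently, it would produce a $g^1_2$, contradicting non-hyperellipticity). This is the direct analogue of Step 3 in Theorem \ref{mainodd}.

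The step I expect to be the main obstacle is the combination of the node-equality bookkeeping with the base of the genus induction. The bound $t\leq 1$, which keeps the descended Brill–Noether number $\leq -1$ and the descended top vanishing within range, is exactly where the hypothesis $a_r\leq g+k-1$ is indispensable, and the argument would degrade if the spread were allowed to reach the torsion order $2g-2$; tracking this for large $k$ (and checking that infeasible numerical choices simply leave the locus empty) is delicate. Equally delicate is the base of the genus induction at $g=r+2$: unlike the case $k=1$, for $k\geq 2$ there exist non-extremal sequences with $a_0\geq 1$ there, which cannot be degenerated further without leaving the range $g\geq r+2$. These I would treat directly, observing that $a_0\geq 1$ makes $L(-a_0p)$ a special linear series of small Clifford index, so that Clifford's theorem forces $C$ to be hyperelliptic and contradicts the non-hyperellipticity of the stratum. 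Verifying that the stratum's genericity is strong enough for these base estimates is, I expect, the crux of the proof.
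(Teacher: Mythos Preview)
Your approach is genuinely different from the paper's, and the difference is instructive. You re-run the entire inductive machinery of Theorem~\ref{mainodd} with the parameter $k$ carried along: elliptic-tail degeneration when $a_r\leq g+k-2$, a direct computation at the extremal value $a_r=g+k-1$, and a Clifford-type argument at the genus base $g=r+2$. The paper does none of this. Its proof is a two-line reduction to Theorem~\ref{mainodd} itself: after the harmless reductions $\rho=-k$ and $a_0\geq 1$ (the case $a_0=0$ drops to rank $r-1$ exactly as you say), one may assume $a_r\geq g+1$, since $a_r\leq g$ is already covered by Theorem~\ref{mainodd}. Then the two constraints $\rho=-k$ and $a_r\leq g+k-1$ force $\sum_{i\leq r-1}(a_i-i)\geq r+1$, and since $a_i-i$ is nondecreasing this gives $a_{r-1}\geq r+1$. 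Hence $a\geq(1,2,\ldots,r-1,r+1,g)$ entrywise, and a $g^r_{g+r}$ with vanishing $a$ is in particular one with vanishing $(1,2,\ldots,r-1,r+1,g)$; that sequence has $\rho=-1$ and top entry $g$, so Theorem~\ref{mainodd} rules it out. No new degeneration, no new base case.

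Your version, by contrast, has a real gap exactly where you flagged it. At the base $g=r+2$ with $a_0=1$ and $a_r\leq g+k-2$, Clifford's inequality alone does not force hyperellipticity: for instance $g=5$, $r=3$, $k=2$, $a=(1,3,4,5)$ satisfies all your hypotheses, and every Clifford bound $h^0(L(-a_jp))\leq \tfrac{1}{2}\deg+1$ holds strictly. What actually kills this example is the stratum identity $h^0(\OO_C((g-1)p))=1$, which is precisely the content of the domination $a\geq(1,2,\ldots,r-1,r+1,g)$ combined with Theorem~\ref{mainodd}. So the only clean way to close your base case is the paper's reduction, at which point the whole re-induction is superfluous. (Your extremal computation at $a_r=g+k-1$ is correct, but note that it too is subsumed by the same domination argument.)
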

\begin{proof}
	Without any loss of generality, we can assume $ \rho(g,r,g+r, a) = -k$. We will prove the theorem by induction on the rank $r$. 
	
	For $r = 1$, we have $a_0 + a_1 = g +k+1$. Moreover, if $a_1 \leq g$, then Theorem \ref{mainodd} implies the conclusion. We can assume $ g +1 \leq a_1 \leq g + k-1$. This implies $ 2 \leq a_0 \leq k$. In particular, we have $(a_0,a_1) \geq (2, g)$. Hence, if a generic $[C,p] \in \mathcal{H}^{\textrm{odd}}_g(2g-2)$ admits $g^1_{g+1}$ with prescribed vanishing $a$ at $p$, then it admits a $g^1_{g+1}$ with prescribed vanishing $(2,g)$ at $p$, contradicting Theorem \ref{mainodd}. 
	
	Next, we assume the theorem to hold for rank $r-1$ and prove it for rank $r$. If $a_0 = 0$ and $[C,p]$ admits a $g^r_{g+r}$ with prescribed vanishing $a$ at $p$, then $[C,p]$ admits a $g^{r-1}_{g+r-1}$ with prescribed vanishing $(a_1-1, a_2-1,\ldots, a_r-1)$ at $p$ and we can apply the induction hypothesis. 
	
	We are left to treat the case $a_0\geq 1$. In particular, this implies $a_i \geq i+1$ for every $0\leq i\leq r$. The conditions $\rho(g,r,g+r,a) = -k$ and $a_r\leq g+k-1$ imply $a_{r-1} \geq r+1$. Moreover, we can assume $a_r \geq g+1$. We have the inequality 
	\[ (a_0, a_1, \ldots, a_r) \geq (1,2,\ldots, r-1, r+1, g).\]
	In particular, if $[C,p]$ admits a $g^r_{g+r}$ with prescribed vanishing $(a_0,\ldots, a_r)$ at $p$, then it admits a $g^r_{g+r}$ with prescribed vanishing $(1,2,\ldots, r-1,r+1, g)$ at $p$. This contradicts Theorem \ref{mainodd}. 
\end{proof}

When $ k = r+1$, the condition $ a_r \leq g+r$ is automatically satisfied. We obtain that
\begin{cor} \label{technical-cor}
	Let $r\geq 1$, $g\geq r+2$ natural numbers and $ a = (0\leq a_0<a_1<\cdots <a_r \leq g+r)$ a vanishing sequence satisfying 
\[ \rho(g,r,g+r, a)\coloneqq g - \sum_{i=0}^{r}(a_i-i) \leq -r-1.\]
 Then, a generic $[C,p]$ in the stratum $\mathcal{H}_g^{\textrm{odd}}(2g-2)$ does not admit a $g^r_{g+r}$ with prescribed vanishing $a$ at $p$. 
\end{cor}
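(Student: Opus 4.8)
The plan is to recognize that Corollary \ref{technical-cor} is precisely the special case $k = r+1$ of Proposition \ref{highercodim}, so the entire task reduces to checking that the hypotheses of the proposition are met under the hypotheses stated here. The only hypothesis of Proposition \ref{highercodim} that does not appear verbatim in the corollary is the bound $a_r \le g + k - 1$ on the top vanishing order. Thus the first and essentially only step is to verify that this bound holds automatically when $k = r+1$.

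For this verification I would argue as follows. Setting $k = r+1$, the bound $a_r \le g + k - 1$ becomes $a_r \le g + r$. But $a_r \le g + r$ is already imposed in the hypothesis of the corollary, since $a$ is a vanishing sequence for a $g^r_{g+r}$ and the largest possible vanishing order for a linear series of degree $g+r$ is exactly $g+r$. Hence the bound $a_r \le g + k - 1$ is vacuous in this case, and every hypothesis of Proposition \ref{highercodim} is satisfied: we have $r \ge 1$, $g \ge r+2$, the vanishing sequence $a$ with $\rho(g,r,g+r,a) \le -(r+1) = -k$, and $a_r \le g+k-1$.

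With the hypotheses of Proposition \ref{highercodim} in force, I would simply invoke that proposition directly: it asserts that a generic $[C,p]$ in $\mathcal{H}_g^{\textrm{odd}}(2g-2)$ does not admit a $g^r_{g+r}$ with prescribed vanishing $a$ at $p$, which is exactly the conclusion of the corollary. No further degeneration or Brill-Noether argument is needed, since all of that work has been carried out in the proof of the proposition via the induction on $r$ and the reduction to Theorem \ref{mainodd}.

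The main obstacle here is conceptual rather than technical: one must notice that the condition $\rho(g,r,g+r,a) \le -r-1$ forces $k = r+1$, at which value the auxiliary constraint on $a_r$ becomes automatic because $a_r \le g+r$ holds by definition of the vanishing sequence. Once this observation is made, the corollary follows immediately from Proposition \ref{highercodim}, so I do not anticipate any genuine difficulty in carrying out the argument.
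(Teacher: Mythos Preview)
Your proposal is correct and matches the paper's own approach exactly: the paper simply remarks that when $k = r+1$ the condition $a_r \leq g + k - 1 = g + r$ is automatically satisfied, and then states the corollary as an immediate consequence of Proposition~\ref{highercodim}. There is nothing more to add.
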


To obtain non-containment results for Brill-Noether loci, we will use a degeneracy argument. In order to control the vanishing orders at the node, we need the following lemma: 

\begin{lm} \label{techBN-lma}
	Let $[C,p] \in \mathcal{M}_{g,1}$ a pointed curve of genus $g$ and assume it admits a $g^r_d$ with ramification orders $(a_0,\ldots,a_r)$ at $p$, where $d > g +r$. Let $j$ be the maximal index for which $a_j < d-g-r + j$. Then the curve $[C,p]$ admits a $g^r_d$ with ramification orders 
	\[ (d-g-r, d-g-r+1,\ldots, d-g-r+j, a_{j+1}, a_{j+2}, \ldots, a_r)\]
	at the point $p$. 
\end{lm}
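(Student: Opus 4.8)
The plan is to use the slack provided by the hypothesis $d>g+r$; set $m:=d-g-r\geq 1$. I would begin with a purely combinatorial observation: since $a_0<a_1<\cdots<a_r$, the sequence $i\mapsto a_i-i$ is non-decreasing, so $\{i:a_i<m+i\}$ is an initial segment $\{0,1,\dots,j\}$. This makes $j$ well defined, shows $a_i\leq m+i-1$ for $i\leq j$ and $a_i\geq m+i$ for $i>j$, and guarantees that the proposed sequence $(m,m+1,\dots,m+j,a_{j+1},\dots,a_r)$ is strictly increasing and dominates $(a_0,\dots,a_r)$ term by term. (If the set is empty there is nothing to prove.) The essential input is Riemann--Roch applied to the twists of the underlying bundle $L$: since $\deg L(-(m+i)p)=g+r-i$, we get
\[ h^0\text{\large(}C,L(-(m+i)p)\text{\large)}\geq r+1-i \qquad (0\leq i\leq r), \]
and this is exactly how $d>g+r$ enters.

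Building on this, I would first produce a $g^r_d$ whose vanishing sequence \emph{dominates} the target. As $h^0(C,L(-mp))\geq r+1$ and the sections $s_{j+1},\dots,s_r$ realizing $a_{j+1},\dots,a_r$ vanish to order $>m$ at $p$, I choose an $(r+1)$-dimensional subspace $U\subseteq H^0(C,L(-mp))$ containing them. For a general such choice, $(U,L)$ is a $g^r_d$ whose vanishing orders are $a_{j+1},\dots,a_r$ together with the $j+1$ smallest vanishing orders $c_0<\cdots<c_j$ of $|L|$ that are $\geq m$; being distinct integers $\geq m$ these satisfy $c_i\geq m+i$. Hence the high part of the target is attained exactly, and the low part up to a possible overshoot.

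The remaining, and main, difficulty is to realize the low orders \emph{exactly} as $m,m+1,\dots,m+j$. Here the honest obstacle is that the complete series $|L|$ may have gaps precisely in this range: a short count using that $i\mapsto h^1(L(-ip))$ is non-decreasing shows that the number of vanishing orders of $|L|$ in $[m,m+j]$ equals
\[ (j+1)-\text{\large(}h^1(L(-(m+j+1)p))-h^1(L(-mp))\text{\large)}, \]
which is strictly less than $j+1$ exactly when $h^1(L(-ip))=h^0(\omega_C-L+ip)$ jumps across this range. When it does, no subspace of the fixed $H^0(C,L)$ has the required bottom orders, so one cannot stay on $L$. I would resolve this by moving $L$ inside $\mathrm{Pic}^d(C)$: working in the locus of degree-$d$ bundles satisfying the high conditions $h^0(N(-a_ip))\geq r+1-i$ for $i>j$ (which contains $L$), I would argue that a suitably chosen member has $m,m+1,\dots,m+j$ among its vanishing orders while still carrying the prescribed high orders $a_{j+1},\dots,a_r$, using upper-semicontinuity of the vanishing sequence together with the Riemann--Roch inequalities above. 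Controlling the interaction between the imposed high vanishing and the low vanishing --- i.e.\ guaranteeing the orders $a_{j+1},\dots,a_r$ survive while the bottom of the series becomes gap-free --- is the delicate step, and the one I would spend the most care on; the alternative route of raising one order at a time, trading against the extra section guaranteed by $h^0(C,L(-a_0p))\geq r+2$, runs into the same gap phenomenon and would need the same remedy.
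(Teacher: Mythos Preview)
Your first paragraph already contains the full proof as given in the paper. In this paper's conventions, a $g^r_d$ with vanishing sequence $a$ at $p$ is a pair $(V,L)$ satisfying $\dim\bigl(V\cap H^0(C,L(-a_ip))\bigr)\geq r+1-i$; so ``admits a $g^r_d$ with ramification orders $a'$'' means the vanishing sequence is \emph{at least} $a'$, not exactly $a'$. With this reading the same line bundle $L$ already works: for $i>j$ the condition $h^0\bigl(C,L(-a_ip)\bigr)\geq r+1-i$ is the original hypothesis, and for $i\leq j$ the condition $h^0\bigl(C,L(-(m+i)p)\bigr)\geq r+1-i$ is precisely your Riemann--Roch observation. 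That is the entire argument in the paper.

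Everything from your second paragraph onward is aimed at realizing the low orders \emph{exactly} as $m,m+1,\dots,m+j$, which the lemma does not assert and which is not needed in its two later applications (in each case the lemma is invoked only to bound a vanishing order from below). The gap phenomenon you describe is real --- for the fixed $L$ the orders $m,\dots,m+j$ need not all occur in $|L|$ --- but it is irrelevant here, and your proposed remedy of deforming $L$ inside $\mathrm{Pic}^d(C)$ is both unnecessary and, as you yourself acknowledge, left incomplete.
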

\begin{proof}
	Let $L$ be a line bundle of degree $d$ satisfying the conditions 
	\[ h^0(C, L(-a_i\cdot p)) \geq r+1-i. \]
	By Riemann-Roch, the conditions 
 	\[ h^0(C, L(-(d-g-r+i)\cdot p)) \geq r+1-i \]
 	are automatically satisfied. This immediately implies the conclusion.
\end{proof}

Next, we will prove non containments of Brill-Noether loci by using the fact that strata of differentials appear naturally in the boundary. Using Proposition \ref{highercodim} and Corollary \ref{technical-cor} we can show that such strata will appear in the boundary of one Brill-Noether locus, but not the other; hence implying the required non-containment. 

\begin{trm} \label{maxBNcrit1}
    Consider some positive integers $g,r,d,s,e$ satisfying the Brill-Noether numerical conditions 
    \[ -1 \geq \rho(g,r,d) \geq -r, \ \textrm{and} \ \rho(g,s,e) = -s-1 \]
    Then we have the non-inclusion $\mathcal{M}^r_{g,d}\not\subseteq \mathcal{M}^s_{g,e}$. 
\end{trm}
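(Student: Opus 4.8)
The plan is to prove the non-inclusion $\mathcal{M}^r_{g,d}\not\subseteq \mathcal{M}^s_{g,e}$ by exhibiting a pointed curve that lies in the boundary closure of one Brill-Noether locus but not the other, using the minimal stratum $\mathcal{H}_g^{\textrm{odd}}(2g-2)$ as the source of a well-understood family of curves with controlled vanishing behaviour at a point. First I would build a clutching degeneration: glue a generic pointed curve $[C,p] \in \mathcal{H}_g^{\textrm{odd}}(2g-2)$ to a rational tail or an elliptic tail at $p$, so that the resulting nodal curve of genus $g$ sits in $\overline{\mathcal{M}}_g$, and observe that membership of the limit in $\mathcal{M}^r_{g,d}$ (resp. $\mathcal{M}^s_{g,e}$) is governed, via the theory of limit linear series on curves of compact type and the node inequality $(\star)$, by whether $[C,p]$ admits a $g^r_d$ (resp. $g^s_e$) with suitable prescribed vanishing at $p$.

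The key computational heart is to translate the unpointed conditions $\rho(g,r,d)\geq -r$ and $\rho(g,s,e)=-s-1$ into pointed vanishing conditions at $p$. For the locus $\mathcal{M}^s_{g,e}$: since $\rho(g,s,e)=-s-1$, I would use Corollary \ref{technical-cor} (the case $k=s+1$) to conclude that the generic $[C,p]$ admits \emph{no} $g^s_{g+s}$ with any prescribed vanishing $a$ of pointed Brill-Noether number $\leq -s-1$ at $p$; after normalizing the degree from $e$ down to $g+s$ via Lemma \ref{techBN-lma} (which lets me replace an arbitrary $g^s_e$, $e>g+s$, by one with the canonical lowest vanishing profile), this shows the degenerate curve cannot lie in the closure of $\mathcal{M}^s_{g,e}$. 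For the locus $\mathcal{M}^r_{g,d}$: since $\rho(g,r,d)\geq -r > -r-1$, the analogous obstruction does \emph{not} apply, and I would instead exhibit a genuine limit linear series of type $g^r_d$ on the degenerate curve — choosing the aspect on $[C,p]$ to be a suitable $g^r_{g+r}$ with vanishing profile whose pointed Brill-Noether number matches $\rho(g,r,d)$ and which is \emph{not} forbidden by Theorem \ref{mainodd} or Proposition \ref{highercodim}, and completing it on the attached tail where all small vanishing conditions are automatically realizable.

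The main obstacle I expect is the careful bookkeeping of the vanishing sequences so that the two sides genuinely separate: I must verify that the numerical gap between $\rho(g,r,d)\geq -r$ and $\rho(g,s,e)=-s-1$ forces a vanishing profile that is admissible on the $\mathcal{H}^{\textrm{odd}}$-aspect for the $g^r_d$ while every candidate profile for the $g^s_e$ violates exactly the hypotheses of Corollary \ref{technical-cor}. In particular I would need the inequality $a_r\leq g+k-1$ from Proposition \ref{highercodim} to hold for the forbidden $s$-dimensional system (after the Lemma \ref{techBN-lma} normalization this is where the condition $\rho=-s-1$, i.e. $k=s+1$ giving $a_s\leq g+s$ automatically, is essential), and to check that the chosen tail contributes nonnegative Brill-Noether number via Brill-Noether additivity so that the $g^r_d$ really does smooth out to a point of $\mathcal{M}^r_{g,d}$. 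Once these two profile computations are pinned down, the degeneration argument yields the desired non-containment directly.
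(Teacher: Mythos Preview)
Your proposal has a genuine gap at the very first step. You take $[C,p]$ generic in $\mathcal{H}_g^{\textrm{odd}}(2g-2)$, a curve of genus $g$, and glue a rational or elliptic tail to obtain a point of $\overline{\mathcal{M}}_g$. But a rational tail attached at a single point is unstable and contracts back to $C$, while an elliptic tail raises the genus to $g+1$; neither produces a boundary point of $\overline{\mathcal{M}}_g$. If one reads your construction as simply using the smooth curve $C\in\mathcal{M}_g$ itself, the argument collapses for a different reason: by Corollary~\ref{ab-expected-dim} the underlying curve of a generic element of $\mathcal{H}_g^{\textrm{odd}}(2g-2)$ has every $W^r_d(C)$ of the expected dimension $\rho(g,r,d)$, hence empty whenever $\rho(g,r,d)<0$. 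So your candidate curve lies in \emph{neither} $\mathcal{M}^r_{g,d}$ nor $\mathcal{M}^s_{g,e}$, and cannot separate them. The statement ``not forbidden by Theorem~\ref{mainodd} or Proposition~\ref{highercodim}'' does not furnish existence of the required $g^r_d$; those results only give non-existence.

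The paper's argument is structurally different. After passing to the Serre dual $(r',d')=(g+r-d-1,\,2g-2-d)$, one clutches a curve $[C_1,p_1]$ of genus $r'+1$ lying in the image of a stratum $\mathcal{H}_{r'+1}(r'+|\rho(g,r,d)|,1,\ldots,1)\to\mathcal{M}_{r'+1,1}$ to a \emph{generic} curve $[C_2,p_2]\in\mathcal{M}_{d-r,1}$. The stratum condition on $C_1$ manufactures a limit $g^{r'}_{d'}$ with a specific vanishing sequence at the node; the hypothesis $|\rho(g,r,d)|\le r$ is exactly what makes the $C_2$-side of this locus dominate $\mathcal{M}_{d-r,1}$, so the limit-linear-series locus has the expected dimension and Eisenbud--Harris smoothing (\cite[Corollary 3.5]{limitlinearbasic}) places $[C_1\cup C_2]$ in $\overline{\mathcal{M}}^{r'}_{g,d'}=\overline{\mathcal{M}}^r_{g,d}$. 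For the other locus, genericity of $[C_2,p_2]$ forces $\rho(l_2,p_2)\ge 0$ for any limit $g^s_e$, so additivity gives $\rho(l_1,p_1)\le -s-1$; then Corollary~\ref{technical-cor} (after Lemma~\ref{techBN-lma}) rules this out on $[C_1,p_1]$. The essential idea you are missing is that the stratum of differentials is invoked in genus $r'+1$, not in genus $g$, and is paired with a Brill-Noether general curve of complementary genus so that the limit $g^r_d$ genuinely exists and smooths.
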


\begin{proof}
	Consider $r' = g+r-d-1$ and $d' = 2g-2-d$. Because of Serre duality we have the equality of loci $\mathcal{M}^r_{g,d} = \mathcal{M}^{r'}_{g,d'}$. We consider the clutching 
	\[ \mathcal{M}_{r'+1,1}\times \mathcal{M}_{d-r,1} \rightarrow \overline{\mathcal{M}}_g \] 
	glueing together the curve of genus $r'+1$ to the curve of genus $d-r$ at the marked points. 
	 
    We consider the locus of limit $g^{r'}_{d'}$ on the image of the clutching, given as \[\mathcal{G}^{r'}_{d',(d'-2r',d'-2r'+1,\ldots, d'-r'-1, d'-r'+|\rho(g,r,d)|)}\times \mathcal{G}^{r'}_{d',(r'-|\rho(g,r,d)|,r'+1,\ldots, 2r')}\]
    
     Our first goal is to show that 
    \[\mathcal{G}^{r'}_{d',(d'-2r',d'-2r'+1,\ldots, d'-r'-1, d'-r'+|\rho(g,r,d)|)}\times \mathcal{G}^{r'}_{d',(r'-|\rho(g,r,d)|,r'+1,\ldots, 2r')}\]
    is of expected dimension. Having this, we can use \cite[Corollary 3.5]{limitlinearbasic} to conclude the locus appear in the degeneration of $\mathcal{G}^{r'}_{d'}$ to the boundary of $\mm_g$.
    
    Notice that, once we take out the base locus $(d'-2r')p$, we obtain on $\mathcal{M}_{r',1}$ the locus of pointed curves admitting a $g^{r'}_{2r'}$ with ramification orders $(0,1,\ldots, r'-1, r'+ |\rho(g,r,d)|)$ at the point. The locus in $\mathcal{M}_{r'+1,1}$ admitting such a linear series is just the image of the stratum $\mathcal{H}_{r'+1}(r'+|\rho(g,r,d)|, \underbrace{1,\ldots,1}_{r'-|\rho(g,r,d)| \ \textrm{times}})$ when we forget all but the first point.

    Our next remark is that the locus $\mathcal{G}^{r'}_{d',(r'-|\rho(g,r,d)|,r'+1,\ldots, 2r')}$ dominates $\mathcal{M}_{d-r,1}$ if and only if we have $|\rho(g,r,d)| \leq r$: We look at the difference 
    \[d' - (d-r) - r' = (2g-2-d) -(d-r) - (g-d+r-1) = g-d-1 = r' -r.\] 
    If a curve admits a $g^{r'}_{d'}$ with ramification orders $(r'-r-1,r'+1,\ldots, 2r')$ at a point, then it admits a $g^{r'}_{d'}$ with ramification orders $(r'-r,r'+1,\ldots, 2r')$, see Lemma \ref{techBN-lma}. But the pointed Brill-Noether number of this condition is $-1$, and hence a generic curve in $\mathcal{M}_{d-r,1}$ does not admit such a linear series. 
    
    Using \cite[Corollary 3.5]{limitlinearbasic}, we get that the elements in \[\mathcal{G}^{r'}_{d',(d'-2r',d'-2r'+1,\ldots, d'-r'-1, d'-r'+|\rho(g,r,d)|)}\times \mathcal{G}^{r'}_{d',(r'-|\rho(g,r,d)|,r'+1,\ldots, 2r')}\]
    appear in the boundary compactification of $\mathcal{G}^{r'}_{d'}$ over $\overline{\mathcal{M}}_g$. 
    
    We consider a generic element $[C_1\cup_{p_1\sim p_2}C_2]$ in the image of 
    \[\mathcal{G}^{r'}_{d',(d'-2r',d'-2r'+1,\ldots, d'-r'-1, d'-r'+|\rho(g,r,d)|)}\times \mathcal{G}^{r'}_{d',(r'-|\rho(g,r,d)|,r'+1,\ldots, 2r')}\rightarrow \Delta_{r'+1}\subseteq \mm_g.\]
    If we assume $\mathcal{M}^r_{g,d}\subseteq \mathcal{M}^s_{g,e}$, it follows that this element $[C_1\cup_{p_1\sim p_2}C_2]$ admits a limit $g^s_e$, whose aspects we denote $l_1$ and $l_2$ and whose ramification orders at $p_1$ and $p_2$ we denote by $a = (a_0,\ldots, a_s)$ and $b= (b_0,\ldots, b_s)$. 
    
    Because $[C_2,p_2]$ is generic in $\mathcal{M}_{d-r,1}$, we must have $\rho(l_2,p_2) \geq 0$. The Brill-Noether additivity 
    \[ -s-1 \geq \rho(l_1,p_1) + \rho(l_2,p_2) \]
    implies $\rho(l_1,p_1) \leq -s-1$. But Corollary \ref{technical-cor} and Lemma \ref{techBN-lma} imply that $[C_1,p_1]$ does not admit a $g^s_e$ with ramification order $a$ at $p_1$ satisfying $\rho(r'+1,s,e,a) \leq -s-1$. This concludes our proof. 
\end{proof}

Using the same method, we can order the Brill-Noether loci in $\mathcal{M}_g$ by the value $2r + \rho(g,r,d)-d$. We can prove: 

\begin{trm} \label{maxBNcrit2}
    Consider some positive integers $g,r,d,s,e$ satisfying the Brill-Noether numerical conditions 
\[ -r \leq \rho(g,r,d) \leq -1, \ \rho(g,s,e) \leq -1 \ \textrm{and} \ d-2r + |\rho(g,r,d)| < e-2s +|\rho(g,s,e)| \]
Then we have the non-inclusion $\mathcal{M}^r_{g,d}\not\subseteq \mathcal{M}^s_{g,e}$. 	
\end{trm}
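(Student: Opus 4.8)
The plan is to mirror the strategy of Theorem \ref{maxBNcrit1}, but now degenerating to a boundary divisor chosen so that the asymmetry in the invariant $d-2r+|\rho(g,r,d)|$ forces the non-containment. Using Serre duality we may again replace $\mathcal{M}^r_{g,d}$ by $\mathcal{M}^{r'}_{g,d'}$ with $r' = g+r-d-1$ and $d' = 2g-2-d$, so that the genus splits as $g = (r'+1) + (d-r)$. First I would fix the clutching $\mathcal{M}_{r'+1,1}\times \mathcal{M}_{d-r,1} \rightarrow \overline{\mathcal{M}}_g$ and, exactly as before, exhibit on the image a locus of limit linear series
\[
\mathcal{G}^{r'}_{d',(d'-2r',\ldots,d'-r'-1,\, d'-r'+|\rho(g,r,d)|)}\times \mathcal{G}^{r'}_{d',(r'-|\rho(g,r,d)|,\,r'+1,\ldots,2r')}
\]
whose first factor comes (after removing the base locus $(d'-2r')p$) from the stratum $\mathcal{H}_{r'+1}(r'+|\rho(g,r,d)|,1^{r'-|\rho(g,r,d)|})$, and whose second factor dominates $\mathcal{M}_{d-r,1}$ because $|\rho(g,r,d)|\leq r$. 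By \cite[Corollary 3.5]{limitlinearbasic} this product locus has expected dimension and hence appears in the smoothing of $\mathcal{G}^{r'}_{d'}$ to $\Delta_{r'+1}$, so a generic point $[C_1\cup_{p_1\sim p_2}C_2]$ of it lies in $\overline{\mathcal{M}}^r_{g,d}$.

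The decisive new input is the numerical hypothesis $d-2r+|\rho(g,r,d)| < e-2s+|\rho(g,s,e)|$, which I would use precisely to control the ramification of any putative limit $g^s_e$ at the node. Assuming for contradiction that $\mathcal{M}^r_{g,d}\subseteq \mathcal{M}^s_{g,e}$, the special curve $[C_1\cup_{p_1\sim p_2}C_2]$ must carry a limit $g^s_e$ with aspects $l_1,l_2$ and vanishing sequences $a=(a_0,\ldots,a_s)$ at $p_1$ and $b=(b_0,\ldots,b_s)$ at $p_2$. Since $[C_2,p_2]$ is generic in $\mathcal{M}_{d-r,1}$, Brill-Noether generality forces $\rho(l_2,p_2)\geq 0$, whence Brill-Noether additivity $-s-1 \geq \rho(l_1,p_1)+\rho(l_2,p_2)$ gives $\rho(l_1,p_1)\leq -s-1$. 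To derive a contradiction I would apply Corollary \ref{technical-cor} on $[C_1,p_1]$, which is generic in (the appropriate image of) $\mathcal{H}_{r'+1}^{\textrm{odd}}$: after normalizing $a$ via Lemma \ref{techBN-lma} so that the leading entries are the forced values $e-g-s+i$, the statement of Corollary \ref{technical-cor} forbids $[C_1,p_1]$ from admitting such a $g^s_e$ once $\rho\leq -s-1$ and the top vanishing stays within the allowed range.

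The step I expect to be the main obstacle is verifying that the ramification sequence $a$ forced on the $C_1$-aspect genuinely satisfies the hypotheses of Corollary \ref{technical-cor}, and it is exactly here that the ordering condition enters. The generic curve $[C_1,p_1]$ has fixed ramification $(r'-|\rho(g,r,d)|,r'+1,\ldots,2r')$ (up to base locus) coming from its membership in the stratum, and the glueing inequalities $a_k + b_{s-k}\geq e$ together with the genericity of $l_2$ pin down how large $a$ must be. The inequality $d-2r+|\rho(g,r,d)| < e-2s+|\rho(g,s,e)|$ is precisely the bookkeeping statement guaranteeing that $a_s$ does not exceed $(r'+1)+s+k-1$ for the relevant $k=|\rho|$, so that Corollary \ref{technical-cor} applies rather than failing through the excluded high-vanishing regime; translating this inequality into the required bound on $a_s$ after the normalization of Lemma \ref{techBN-lma} is the delicate computational heart of the argument, and I would carry it out carefully, treating separately the boundary case where $a_0$ is forced to be positive.
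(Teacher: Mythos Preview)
Your argument has a genuine gap: you have copied the clutching from Theorem \ref{maxBNcrit1} verbatim, but the paper deliberately changes the genus split here. The paper clutches along $\mathcal{M}_{g_1,1}\times\mathcal{M}_{g_2,1}$ with
\[
g_1 = g+2r-d-|\rho(g,r,d)|,\qquad g_2 = d-2r+|\rho(g,r,d)|,
\]
whereas you use $g_1=r'+1=g+r-d$ and $g_2=d-r$. These coincide only when $|\rho(g,r,d)|=r$; for $|\rho(g,r,d)|<r$ your numerics do not close. Concretely: from Lemma \ref{techBN-lma} applied to the genus-$g_2$ aspect you get $b_0\ge e-g_2-s$, hence (refined) $a_s\le g_2+s$; after normalizing to a $g^s_{g_1+s}$ on $C_1$ the top vanishing becomes at most $g_2+2s-e+g_1$. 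To invoke Proposition \ref{highercodim} with $k=|\rho(g,s,e)|$ you need this to be at most $g_1+|\rho(g,s,e)|-1$, i.e.\ $g_2\le e-2s+|\rho(g,s,e)|-1$. With the paper's $g_2=d-2r+|\rho(g,r,d)|$ this is \emph{exactly} the hypothesis; with your $g_2=d-r$ it is the strictly stronger inequality $d-r<e-2s+|\rho(g,s,e)|$, which does not follow.

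Two further slips reflect the same confusion. First, Brill--Noether additivity gives only $\rho(g,s,e)\ge\rho(l_1,p_1)+\rho(l_2,p_2)$, and here $\rho(g,s,e)$ is merely $\le -1$, not $-s-1$; your displayed inequality ``$-s-1\ge\rho(l_1,p_1)+\rho(l_2,p_2)$'' is unjustified. Second, the tool you need on $[C_1,p_1]$ is Proposition \ref{highercodim} (which couples $\rho\le -k$ with the bound $a_s\le g_1+k-1$), not Corollary \ref{technical-cor} (which requires $\rho\le -s-1$). The paper's argument takes $[C_1,p_1]$ generic in $\mathcal{H}^{\textrm{odd}}_{g_1}(2g_1-2)$, observes this lies in the relevant stratum image so that the limit $g^{r'}_{d'}$ locus has a component of expected dimension there, and then applies Proposition \ref{highercodim} with $k=|\rho(g,s,e)|$ after the base-locus normalization. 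Your outline is the right shape, but you must switch to the paper's clutching to make the key inequality land.
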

\begin{proof}
	Consider $r' = g+r-d-1$ and $d' = 2g-2-d$. Because of Serre duality we have the equality of loci $\mathcal{M}^r_{g,d} = \mathcal{M}^{r'}_{g,d'}$. We consider the clutching
	\[ \mathcal{M}_{g+2r'-d'-|\rho(g,r,d)|,1}\times \mathcal{M}_{d'-2r'+|\rho(g,r,d)|,1} \rightarrow \overline{\mathcal{M}}_g \] 
	and we denote $g_1 \coloneqq g+2r'-d' - |\rho(g,r,d)|$ and $g_2 \coloneqq d'-2r' +|\rho(g,r,d)|$. 
	
   We consider the locus of limit $g^{r'}_{d'}$ on the image of the clutching, given as \[\mathcal{G}^{r'}_{d',(d'-g_1-r'+1,\ldots, d'-g_1, d'-r'+|\rho(g,r,d)|)}\times \mathcal{G}^{r'}_{d',(r'-|\rho(g,r,d)|,g_1,\ldots, g_1+r'-1)}\]
   
   Notice that, once we take out the base locus $(d'-g_1-r'-1)p$, we obtain on $\mathcal{M}_{g_1,1}$ the locus of pointed curves admitting a $g^{r'}_{g_1+r'-1}$ with ramification orders $(0,1,\ldots, r'-1, g_1+ |\rho(g,r,d)|-1)$ at the point. 
   
   Let $[C,p, l]$ be an element in $\mathcal{G}^{r'}_{g_1+r'-1, (0,1,\ldots, r'-1, g_1+|\rho(g,r,d)|-1)}$ and let $L$ be the underlying line bundle of $l$. Then we have 
   \[ L \cong \OO_C\text{\large(}(g_1+|\rho(g,r,d)|-1)p + D \text{\large)} \]
   for some effective divisor $D$ of degree $r'-|\rho(g,r,d)|$. 
   By Serre duality, the condition $h^0(C, L) \geq r +1$ is equivalent to 
   \[ h^0(C, \omega_C(-D-(g_1+|\rho(g,r,d)|)p)) \geq 1 \]
   In particular, $[C,p]$ is in the image of the map 
   \[ \mathcal{H}_{g_1}(g_1+|\rho(g,r,d)|-1, 1, \ldots, 1) \rightarrow \mathcal{M}_{g_1,1} \]   
   forgetting all but the first marking. Moreover, 
   \[ L \cong \OO_C\text{\large(}(g_1+|\rho(g,r,d)|-1)p + D \text{\large)} \]
    where $K_C - D - (g_1+|\rho(g,r,d)|-1)p$ is equivalent to an effective divisor. 
    
   The locus $\mathcal{H}_{g_1}(2g_1-2)$ is contained in the image of the forgetful map
      \[ \mathcal{H}_{g_1}(g_1+|\rho(g,r,d)|-1, 1, \ldots, 1) \rightarrow \mathcal{M}_{g_1,1} \]   
      
     The discussion above implies the existence of a component of  $\mathcal{G}^{g_1-1}_{2g_1-1,(0,\ldots, g_1-2-r'. g_1-r',\ldots, g_1-1, 2g_1-2-r'+|\rho(g,r,d)|)}$, having dimension $3g-3-|\rho(g,r,d)|$ as expected, and whose image to $\mathcal{M}_{g_1,1}$ contains $\mathcal{H}^{\textrm{odd}}_{g_1}(2g_1-2)$. 
   
    We use \cite[Corollary 3.5]{limitlinearbasic} to conclude that the components of expected dimension of 
    \[\mathcal{G}^{r'}_{d',(d'-g_1-r'+1,\ldots, d'-g_1, d'-r'+|\rho(g,r,d)|)}\times \mathcal{G}^{r'}_{d',(r'-|\rho(g,r,d)|,g_1,\ldots, g_1+r'-1)}\]
   appear in the boundary compactification of $\mathcal{G}^{r'}_{d'}$ over $\overline{\mathcal{M}}_g$.  
   
   We consider a generic element $[C_1\cup_{p_1\sim p_2}C_2]$ in the image of 
   \[\mathcal{G}^{r'}_{d',(d'-g_1-r'+1,\ldots, d'-g_1, d'-r'+|\rho(g,r,d)|)}\times \mathcal{G}^{r'}_{d',(r'-|\rho(g,r,d)|,g_1,\ldots, g_1+r'-1)}\rightarrow \Delta_{g_1}\subseteq \mm_g\]
   If we assume $\mathcal{M}^r_{g,d}\subseteq \mathcal{M}^s_{g,e}$, it follows that this element $[C_1\cup_{p_1\sim p_2}C_2]$ admits a limit $g^s_e$, whose aspects we denote $l_1$ and $l_2$ and whose ramification orders at $p_1$ and $p_2$ we denote by $a = (a_0,\ldots, a_s)$ and $b= (b_0,\ldots, b_s)$. Without loss of generality, we can assume the limit linear series to be refined. Moreover, applying lemma \ref{techBN-lma} we can assume $b_0 \geq e-s-d+2r -|\rho(g,r,d)|$. This implies 
   \[ a_s \leq d+s + |\rho(g,r,d)|-2r\]
   After adding the base locus $(g_1+s-e) \cdot p_1$ we obtain that the curve $[C_1,p_1]$ admits a $g^s_{g_1+s}$ with ramification orders $(a_0+g_1+s-e,a_1+g_1+s-e,\ldots, a_s+g_1+s-e)$ at $p_1$. 
   
   If we show the inequality 
   \[ a_s + g_1+s-e \leq g_1 + |\rho(g,s,e)|-1 \]
   the conclusion follows from Proposition \ref{highercodim}. After subtracting $g_1$ from both sides, we are left to prove $a_s + s - e \leq |\rho(g,s,e)| -1$. But we have 
   \[ a_s + s-e \leq d + s +|\rho(g,r,d)|-2r + s-e \leq |\rho(g,s,e)| -1  \]
   because of the inequality $d-2r + |\rho(g,r,d)| < e-2s +|\rho(g,s,e)|$. This concludes the proof because of Proposition \ref{highercodim}. 
\end{proof}
\section{Uniruledness of quadratic strata in genus $6$}

In this section, we will prove uniruledness for all projectivized strata of quadratic differentials with positive partition in genus $6$. As mentioned in the introduction, this reduces to show that for a generic $[C,x_1,\ldots, x_n]$ in $\mathcal{Q}_6(\mu)$, the curve $C$ appears as a section in the linear system $|-2K_S|$ of a del Pezzo surface $S \in \mathcal{P}_4$ (i.e. $S$ is a blow-up of $\mathbb{P}^2$ in $4$ points).

\textbf{Proof of Theorem \ref{birational}:} We know from  \cite{Mukgrass} that for a curve $C$ to be contained in the linear series $|-2K_S|$ of a del Pezzo surface $S$, it suffices to know that 
	\begin{itemize}
		\item the curve $C$ has maximal gonality, 
		\item the curve $C$ is not bi-elliptic and 
		\item the curve $C$ is not a plane quintic. 
	\end{itemize}

Let $[C,p_1,\ldots,p_n] \in \mathcal{Q}_6(\mu)$ a generic element. We know from \cite[Proposition 3.2]{Budstrata} that $C$ has maximal gonality. We also know from Theorem \ref{mainquad} that $C$ is not a plane quintic. We are left to show that $C$ is not bi-elliptic. 

It is sufficient to show that for a generic element $[C,p] \in \mathcal{Q}_6(20)$, the curve $C$ is not bi-elliptic. 

Consider $[E_i, p_i,q_i] \in \mathcal{Q}_1(24-4i, 4i-24)$ for $1\leq i\leq 6$. We can assume that the six elliptic curves are all different. We consider the curve $X$ obtained by glueing $q_i$ to $p_{i+1}$ for $1\leq i \leq 5$. It follows from \cite[Theorem 1.5]{Daweik-diffcomp} that $ [X,p_1]\in \overline{\mathcal{Q}}_6(20)$.

We will use the theory of admissible covers, see \cite[Definition 4.1.1]{AbramovichCV}. Assume there exists an admissible double cover $\pi\colon X\rightarrow E$ from $X$ to a nodal curve $Y$ of genus $1$. 

We denote by $E\subseteq Y$ the unique elliptic component of $Y$. Because the $E_i$'s are all different, there is at most one of them satisfying $\pi(E_i) = E$. It follows that all the other elliptic components are mapped $2\colon 1$ to a rational component of $Y$. 

A $2\colon 1$ cover from an elliptic curve to a rational curve has exactly $4$ ramification points.

We denote by $E_i$ the component of $X$ mapped to $E$. For $j \neq i$, the map $\pi_{|E_j}\rightarrow \mathbb{P}^1$ has at least two ramification points that are not nodes of $X$ if $2\leq j \leq 5$, and at least three ramification points that are not nodes of $X$ if $j = 1$ or $j=6$. This gives 
\[ \sum_{j\neq i} \# (\textrm{ramifications of} \ \pi_{|E_j} \ \textrm{that are not nodes of} \ X) \geq 11 \] 

This is impossible. Using the Riemann-Hurwitz theorem and the definition of admissible covers, see \cite{AbramovichCV}, the sum must be equal to $10$. 

Hence, for a generic $[C,p_1,\ldots,p_n] \in \mathcal{Q}_6(\mu)$ the curve $C$ respects all three conditions. This implies uniruledness. 
 \hfill $\square$

\section{Quadratic strata and Prym-Brill-Noether loci} \label{s:prym}

The data $(C,\eta)$ of a smooth genus $g$ curve and a $2$-torsion line bundle on it is equivalent to the datum of an \'etale double cover $\pi\colon \widetilde{C}\rightarrow C$. To this double cover, we can associate its norm map 
\[ \mathrm{Nm}_\pi\colon \mathrm{Pic}^{2g-2}(\widetilde{C}) \rightarrow \mathrm{Pic}^{2g-2}(C)\]
and consequently we can define the Prym-Brill-Noether loci 
\[V^r(C,\eta) \coloneqq \left\{ L \in \mathrm{Pic}^{2g-2}(\widetilde{C}) \ | \ \mathrm{Nm}(L)\cong \omega_C, \ h^0(\widetilde{C}, L) \geq r+1, \ \mathrm{and} \ h^0(\widetilde{C}, L) \equiv r+1\ (\mathrm{mod} \ 2)  \right\}.\]
For a generic pair $[C,\eta] \in \mathcal{R}_g$, this Prym-Brill-Noether locus has dimension $g-1-\frac{r(r+1)}{2}$. Moreover, when $ g = \frac{r(r+1)}{2}$ the locus 
\[ \cR^r_g \coloneqq \left\{ [C,\eta] \in \cR_g \ | \ V^r(C,\eta) \neq \emptyset \right\} \]
is a divisor whose class is computed in \cite{BudPBN}. 

We are interested in the position of the quadratic strata with respect to this divisor, and moreover in the dimension of the Prym-Brill-Noether loci for a generic $[C,p_1,\ldots, p_n] \in \mathcal{Q}_g(\mu)$. To study this, we will use again a degeneration argument. We will use the compactification $\rr_g$ of $\cR_g$ appearing in \cite{Casa} and \cite{FarLud}. 

\begin{prop} \label{PBNtransv} Let $r\geq 3$, $g = \frac{r(r+1)}{2}$ and $\pi\colon \mathcal{Q}_g(4g-4) \rightarrow \mathcal{R}_g$ the map sending $[C,p]$ to the Prym curve \normalfont $[C, \omega_C\text{\large(}-(2g-2)\cdot p\text{\large)}]$. Then the image of $\pi$ is not contained in the Prym-Brill-Noether divisor $\mathcal{R}^r_g$.
\end{prop}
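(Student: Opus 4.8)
The plan is to prove emptiness of $V^r(C,\eta)$ for generic $[C,p]$ by a boundary degeneration in $\rr_g$, entirely parallel to the strata-versus-divisor arguments of the previous sections, but now carried out on the Prym side. First note that $\pi$ is genuinely well-defined into $\mathcal{R}_g$: for $[C,p]\in\mathcal{Q}_g(4g-4)=\mathcal{H}^2_g(4g-4)\setminus\mathcal{H}_g(2g-2)$ the bundle $\eta=\omega_C(-(2g-2)p)$ satisfies $\eta^{\otimes 2}=\omega_C^2(-(4g-4)p)\cong\mathcal{O}_C$ and is nontrivial exactly because we removed the Abelian locus. Since $\mathcal{R}^r_g$ is a divisor, the locus where $V^r\neq\emptyset$ is closed, and $\pi(\mathcal{Q}_g(4g-4))$ is irreducible; hence it suffices to exhibit a single point $[C_0,\eta_0]$ in the closure of the image, inside the compactification $\rr_g$ of \cite{Casa} and \cite{FarLud}, that does not lie in the closure of $\mathcal{R}^r_g$, i.e. that carries no limit Prym-Brill-Noether series. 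The regeneration statement for limit Prym-Brill-Noether series (the dictionary turning Prym-Brill-Noether conditions into pointed Brill-Noether conditions from \cite{BudPBN} and \cite{BudPrymIrr}) then forces the generic smooth member of the image to have empty $V^r$ as well.

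For the degeneration itself I would reuse the clutching of Theorem \ref{mainquad}: degenerate $[C,p]$ to $[C_0,p]$ with $C_0=X\cup_{y\sim q}E$, where $E$ is an elliptic tail carrying $p$, $[X,y]\in\mathcal{Q}_{g-1}(4g-8)$, and $p-q$ is chosen exactly $(4g-4)$-torsion so that $[C_0,p]\in\overline{\mathcal{Q}}_g(4g-4)$ by \cite[Theorem 1.5]{Daweik-diffcomp}. Choosing the two-torsion data so that the limiting Prym curve $[C_0,\eta_0]$ is a boundary point of $\rr_g$ supported on this configuration, the limit-Prym-Brill-Noether formalism describes a putative limit of $V^r$ at $[C_0,\eta_0]$ by aspects on $X$ and on $E$ subject to the node-compatibility inequalities of type $(\star)$ at $y\sim q$, the $X$-aspect being a pointed series with a prescribed vanishing at $y$.

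Brill-Noether additivity for the Prym-Brill-Noether numbers then gives $-1=\rho^-(g,r)=g-1-\binom{r+1}{2}\geq \rho_X+\rho_E$, and since the elliptic aspect contributes $\rho_E\geq 0$ by genericity of the tail, the $X$-aspect is forced to be a $g^{r'}_{d'}$ on $[X,y]$ whose pointed Brill-Noether number at $y$ is $\leq -1$. After clearing base points by Lemma \ref{techBN-lma}, this contradicts Theorem \ref{mainquad} (or, for the higher-codimension bookkeeping, Proposition \ref{highercodim} and Corollary \ref{technical-cor}) applied to the generic $[X,y]\in\mathcal{Q}_{g-1}(4g-8)$. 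Hence no limit series exists, $[C_0,\eta_0]$ avoids the closure of $\mathcal{R}^r_g$, and the proposition follows. I would organize this as an induction on $r$, equivalently on $g=\binom{r+1}{2}$, with the genus drop absorbed by the tail configuration and the base case $r=3$, $g=6$ checked by hand from the explicit low-genus description of $V^3$.

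The main obstacle is the Prym-side limit-linear-series bookkeeping rather than the numerics. One must pin down the precise boundary divisor of $\rr_g$ so that the admissible double cover degenerates with an elliptic aspect contributing non-negatively while the $X$-aspect is exactly a pointed series eligible for Theorem \ref{mainquad}; one must propagate the parity constraint $h^0(\widetilde{C},L)\equiv r+1 \pmod 2$ through the specialization, since it is part of the definition of $V^r$ and the regeneration theorem has to respect it; and one must verify that the relevant component of the limit locus meets the expected dimension, so that a smoothing statement of \cite[Corollary 3.5]{limitlinearbasic} type applies. Controlling these three points simultaneously is the technical heart of the argument.
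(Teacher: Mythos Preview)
Your degeneration is the wrong one, and the gap is not merely bookkeeping. You take $[X,y]\in\mathcal{Q}_{g-1}(4g-8)$ and attach an elliptic tail with $p-q$ of exact order $4g-4$, as in Theorem~\ref{mainquad}. Compute the limit of $\eta=\omega_C(-(2g-2)p)$ on $C_0=X\cup_{y\sim q}E$: after twisting to multidegree $(0,0)$ one finds $\eta_0|_X\cong\omega_X(-(2g-4)y)$ and $\eta_0|_E\cong\OO_E\big((2g-2)(q-p)\big)$. Since $[X,y]$ lies in the \emph{quadratic} stratum and not in $\mathcal{H}_{g-1}(2g-4)$, the bundle $\eta_0|_X$ is a \emph{nontrivial} $2$-torsion on $X$. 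Consequently the admissible double cover of $C_0$ is connected over $X$, and the limit of a point of $V^r$ is a Prym-type linear series on $(X,\eta_0|_X)$, not a classical pointed $g^{r'}_{d'}$ on $X$. Your appeal to ``Brill-Noether additivity for Prym numbers'' and then to Theorem~\ref{mainquad} therefore does not go through: Theorem~\ref{mainquad} concerns ordinary pointed Brill--Noether loci, and there is no reduction available in this boundary stratum that converts the Prym condition into such a condition on $X$. The induction you propose does not rescue this either: dropping genus by $1$ lands you on $(X,\eta_0|_X)$ with a nontrivial Prym structure, which is not an instance of the proposition at the previous value of $r$.

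The paper's proof chooses the degeneration precisely to kill $\eta_0|_X$. One takes $[X,y]\in\mathcal{H}_{g-1}^{\textrm{odd}}(2g-4)$, so that $\omega_X\cong\OO_X((2g-4)y)$ and hence $\eta_0|_X\cong\OO_X$; the limit Prym curve then lies in the boundary component of $\rr_g$ where \cite[Proposition~4.6]{BudPBN} applies and reduces membership in $\rr^r_g$ to the existence on $[X,y]$ of a $g^{r-1}_{g+r-3}$ with vanishing $(0,2,\ldots,2r-2)$ at $y$. Since $2r-2\le g-1$ for $r\ge 3$, Theorem~\ref{mainodd} (not Theorem~\ref{mainquad}) excludes this for generic $[X,y]$, and no induction is needed. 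The price for using the Abelian stratum on the $X$-side is that the clutching into $\overline{\mathcal{Q}}_g(4g-4)$ now requires the elliptic tail $[E,p,q]\in\mathcal{Q}_1(4g-4,4-4g)$ to satisfy the quadratic residue condition $\mathrm{Res}^2_q(s)=0$; this is supplied by \cite{GenTah-quad}, and is exactly the ingredient your torsion-only choice of $E$ misses.
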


\begin{proof}
	We consider an element $[E,p,q] \in \mathcal{Q}_{1}(4g-4, 4-4g)$ whose associated quadratic differential $s$ satisfies $\textrm{Res}_q^2(s) = 0$. Such a choice of $[E,p,q]$ is guaranteed by \cite[Th\'eor\`eme 1.2]{GenTah-quad}. 
	We consider the map 
	\[ \mathcal{H}_{g-1}^{\textrm{odd}}(2g-4) \rightarrow \overline{\mathcal{Q}}_g(4g-4) \]
	sending a pointed curve $[X,y]$ to $[X\cup_{y\sim q}E,p]$. This map is well-defined, see \cite{Daweik-diffcomp}.
	
	The map $\pi\colon \mathcal{Q}_g(4g-4) \rightarrow \mathcal{R}_g$ extends to the boundary and sends the curve $[X\cup_{y\sim q}E,p]$ to the Prym curve $[X\cup_{y\sim q}E,\OO_X,\OO_E\text{\large(}(2g-2)\cdot (p-q)\text{\large)}]$. This curve is contained in the divisor $\Delta_1$ of $\rr_g$. 
	
	We know from \cite[Proposition 4.6]{BudPBN} that if $[X\cup_{y\sim q}E,\OO_X,\OO_E\text{\large(}(2g-2)\cdot (p-q)\text{\large)}]$ is contained in the Prym-Brill-Noether divisor, then $[X,y]$ is contained in the divisor  $\cM^{r-1}_{g-1,g+r-3}(a)$ parametrizing pointed curves $[C,p]$ admitting a $g^{r-1}_{g+r-3}$ with prescribed vanishing $a = (0,2,\ldots, 2r-2)$ at the point $p$. 
	
	Because $2r-2 \leq  \frac{r(r+1)}{2}-1$ for $r\geq 3$, Theorem \ref{mainodd} implies that a generic $[X,y]\in \mathcal{H}_{g-1}^{\textrm{odd}}(2g-4)$ does not admit such a $g^{r-1}_{g+r-3}$.  

	We conclude that $[X\cup_{y\sim q}E,\OO_X,\OO_E\text{\large(}(2g-2)\cdot (p-q)\text{\large)}]$ is not contained in $\rr^r_g$. 
\end{proof}

In fact, using the same method, we can prove an analogue of Corollary \ref{ab-expected-dim}:

\begin{prop} \label{PBN-expected} Let $g\geq 3$ and $\mu = (2m_1,2m_2,\ldots, 2m_n)$ a positive partition of $4g-4$. Let $[C,p_1,p_2,\ldots,p_n]$ be a generic element of $\mathcal{Q}_g(\mu)$. Then for every positive integer $r$ with $g-1-\frac{r(r+1)}{2} \geq 0$ the Prym-Brill-Noether locus $V^r(C, \omega_C(-\sum_{i=1}^nm_i\cdot p_i))$ is of expected dimension $g-1-\frac{r(r+1)}{2}$.
\end{prop}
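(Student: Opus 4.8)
\textbf{Proof proposal for Proposition \ref{PBN-expected}.}

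The plan is to mimic the degeneration argument used in Proposition \ref{PBNtransv}, but now controlling the full Prym-Brill-Noether locus rather than just its emptiness, and to reduce the general partition $\mu$ to the minimal stratum via the clutching morphism. First I would use the clutching map $\mathcal{Q}_g(4g-4) \rightarrow \overline{\mathcal{Q}}_g(\mu)$ sending $[C,p]$ to $[C\cup_p \mathbb{P}^1, p_1,\ldots, p_n]$ with the $p_i$ on the rational tail, together with the fact that the Prym structure $\omega_C(-\sum m_i p_i)$ pulls back to $\omega_C(-(2g-2)p)$ on the minimal stratum. Since dimension of $V^r$ is upper semicontinuous and the expected dimension is a lower bound that always holds (by the connectedness/dimension theory of Prym-Brill-Noether loci, see \cite{BudPBN}), it suffices to prove that for a generic $[C,p]\in \mathcal{Q}_g(4g-4)$ the locus $V^r(C,\omega_C(-(2g-2)p))$ has dimension exactly $g-1-\frac{r(r+1)}{2}$, i.e. is not larger than expected.

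Next I would degenerate $[C,p]$ to the boundary using the map $\mathcal{H}_{g-1}^{\textrm{odd}}(2g-4)\rightarrow \overline{\mathcal{Q}}_g(4g-4)$ attaching an elliptic tail $E$ with $p-q$ exactly $(4g-4)$-torsion, as in the proof of Theorem \ref{mainquad} and Proposition \ref{PBNtransv}. Under $\pi$ this sends $[X\cup_{y\sim q}E,p]$ to the Prym curve $[X\cup_{y\sim q}E,\OO_X,\OO_E((2g-2)(p-q))]$ sitting in $\Delta_1\subseteq \rr_g$. The key input is the degeneration of Prym-Brill-Noether loci across $\Delta_1$: by the analysis in \cite[Proposition 4.6]{BudPBN}, an excess-dimensional component of $V^r$ on the boundary curve forces $[X,y]$ to carry a pointed linear series $g^{r'}_{g+r'-?}$ on the genus $g-1$ aspect whose vanishing sequence at $y$ is controlled, and whose pointed Brill-Noether number is correspondingly negative. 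Concretely, each extra dimension in $V^r$ beyond the expected $g-1-\frac{r(r+1)}{2}$ translates, via the compatibility of vanishing orders at the node, into a pointed linear series on $[X,y]$ with $\rho(g-1,\cdot,\cdot,a)<0$ of exactly the shape ruled out by Theorem \ref{mainodd} (or by Corollary \ref{technical-cor} for the higher-codimension excess). The vanishing sequence that appears is again of the form $(0,2,4,\ldots,2r-2)$-type shifted appropriately, and the inequality $2r-2\leq \frac{r(r+1)}{2}-1$ valid for $r\geq 3$ guarantees $a_r\leq g-1$ so that Theorem \ref{mainodd} applies.

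The main obstacle will be the boundary dimension count: I must show not merely that some forbidden linear series exists when $V^r$ is empty, as in Proposition \ref{PBNtransv}, but that every component of $V^r$ of dimension exceeding $g-1-\frac{r(r+1)}{2}$ produces such a series on $X$, with the correct vanishing profile. This requires an additivity statement for the Prym-Brill-Noether number across $\Delta_1$ analogous to Brill-Noether additivity, identifying the contribution of the elliptic aspect $\OO_E((2g-2)(p-q))$ and matching the node conditions $(\star)$. Once the additivity inequality is set up so that an excess dimension $\delta>0$ forces a pointed series on $[X,y]$ with pointed Brill-Noether number $\leq -1-\delta$, Corollary \ref{technical-cor} (rather than just Theorem \ref{mainodd}) closes the argument for all $\delta$, and the induction base is handled exactly as the genus reduction in the earlier proofs. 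I expect the bookkeeping of vanishing orders at the node to be the delicate part, but no genuinely new geometric input beyond \cite{BudPBN} and Theorem \ref{mainodd} should be needed.
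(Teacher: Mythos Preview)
Your overall architecture is right and matches the paper: reduce to the minimal stratum $\mathcal{Q}_g(4g-4)$ by attaching a rational tail, degenerate to $[X\cup_{y\sim q}E,p]$ with $[X,y]$ generic in $\mathcal{H}_{g-1}^{\mathrm{odd}}(2g-4)$, and analyse $V^r$ on the boundary Prym curve via the machinery of \cite{BudPBN}, \cite{BudPrymIrr}. The vanishing profile $a=(0,2,\ldots,2r-2)$ you single out is also the correct one.

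The gap is in the last step, and it is a real one. You claim that an excess dimension $\delta>0$ in $V^r$ on the boundary curve forces on $[X,y]$ a pointed linear series with Brill--Noether number $\leq -1-\delta$, to be excluded by Theorem~\ref{mainodd} or Corollary~\ref{technical-cor}. But the Prym-limit-linear-series degeneration does not produce a \emph{more special} vanishing sequence when the dimension jumps; what it gives (see \cite{BudPBN}, \cite{BudPrymIrr}) is a direct dimension comparison
\[
\dim V^r\bigl(X\cup_{y\sim q} E,\ \OO_X,\ \OO_E((2g-2)(p-q))\bigr)\ \leq\ \dim W^{r-1}_{g+r-3,\,a}(X,y),\qquad a=(0,2,\ldots,2r-2),
\]
with the \emph{same} $a$ throughout. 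Excess on the left therefore becomes excess on the right, still with $\rho(g-1,r-1,g+r-3,a)=g-1-\tfrac{r(r+1)}{2}\geq 0$. The negative-$\rho$ theorems say nothing about this situation. What closes the argument is the \emph{positive}-$\rho$ result, Theorem~\ref{posmainodd}: for generic $[X,y]\in\mathcal{H}_{g-1}^{\mathrm{odd}}(2g-4)$ every component of $W^{r-1}_{g+r-3,a}(X,y)$ has the expected dimension $g-1-\tfrac{r(r+1)}{2}$. Combined with the lower bound you already noted, this yields equality directly. A side benefit is that the restriction to $r\geq 3$ you impose (coming from the hypothesis $a_r\leq g$ of Theorem~\ref{mainodd}) disappears, since Theorem~\ref{posmainodd} carries no such bound on $a_r$.
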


\begin{proof}
	By a specialization argument, it is sufficient to prove this for $\mu = 4g-4$. We look at the map  
		\[ \mathcal{H}_{g-1}^{\textrm{odd}}(2g-4) \rightarrow \overline{\mathcal{Q}}_g(4g-4) \]
	as in the previous proposition and consider $[X\cup_{y\sim q}E,p]$ a generic element in its image. Looking at Prym-linear series and reasoning similarly to \cite{BudPBN} and \cite{BudPrymIrr} we obtain the inequality: 
    \[ \dim\text{\Large(}V^r\text{\large(}E\cup_{q\sim y}X,\OO_E\text{\large(}(2g-2)\cdot (p-q)\text{\large)}, \OO_X \text{\large)}\text{\Large)} \leq \dim\text{\large(}W^{r-1}_{g+r-3,a}(X,y)\text{\large)} \]
    where $ a = (0, 2 , \cdots , 2r-2)$.

But Theorem \ref{posmaineven} tells us that 
\[ \dim\text{\large(}W^{r-1}_{g+r-3,a}(X,y)\text{\large)} = \rho(g-1,r-1,g+r-3,a) = g-1-\frac{r(r+1)}{2}.\]
To conclude the proposition, for a generic $[C,p] \in \mathcal{Q}_g(2g-2)$ we have the double inequality 
\[ g-1-\frac{r(r+1)}{2} \leq V^r\text{\large(}C, \omega_C(-(2g-2)\cdot p)\text{\large)} \leq  g-1-\frac{r(r+1)}{2}\] 
and hence the conclusion. 
\end{proof}

Finally, we study the position of quadratic strata with respect to the Prym-Hurwitz divisor appearing in \cite[Theorem 0.2]{FarLud}. We consider the divisor 
\[ \mathcal{D}_{2i+1:2} \coloneqq \left\{[C,\eta]\in \mathcal{R}_{2i+1} \ | \ h^0(C,\wedge^iQ_{K_C}\otimes \eta) \geq 1 \right\}\]
    
It was remarked in \cite{FarLud} and proven in \cite{FarMust} that 
\[ \mathcal{D}_{2i+1:2} = \left\{[C,\eta] \in \mathcal{R}_{2i+1} \ | \ \eta \in C_i-C_i \ \subset \textrm{Pic}^0(C) \right\} \]
where $C_i-C_i$ denotes the $i$-th difference variety of $C$. In particular, this description realizes $\mathcal{D}_{2i+1:2}$ as a Hurwitz divisor and its intersection with the boundary divisor can be studied as in \cite{Bud-adm} and \cite{Bud-newdiv}. 

\begin{prop} \label{FarkasLudwig}
	 Let $g = 2i+1$ and $\pi\colon \mathcal{Q}_g(4g-4) \rightarrow \mathcal{R}_g$ the map sending $[C,p]$ to \normalfont $[C, \omega_C\text{\large(}-(2g-2)\cdot p\text{\large)}]$. Then the image of $\pi$ is not contained in the divisor $\mathcal{D}_{2i+1:2}$. 
\end{prop}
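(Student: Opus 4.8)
The plan is to mirror the degeneration strategy used in Proposition \ref{PBNtransv}, replacing the Prym-Brill-Noether divisor $\mathcal{R}^r_g$ by the Prym-Hurwitz divisor $\mathcal{D}_{2i+1:2}$, and exploiting the difference-variety description
\[ \mathcal{D}_{2i+1:2} = \left\{[C,\eta] \in \mathcal{R}_{2i+1} \ | \ \eta \in C_i-C_i \subset \textrm{Pic}^0(C) \right\}. \]
First I would take a marked elliptic curve $[E,p,q] \in \mathcal{Q}_1(4g-4, 4-4g)$ with vanishing quadratic residue $\textrm{Res}_q^2(s) = 0$, guaranteed by \cite[Th\'eor\`eme 1.2]{GenTah-quad}, and form the clutching map $\mathcal{H}_{g-1}^{\textrm{odd}}(2g-4) \rightarrow \overline{\mathcal{Q}}_g(4g-4)$ sending $[X,y]$ to $[X\cup_{y\sim q}E,p]$, well-defined by \cite{Daweik-diffcomp}. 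As before, $\pi$ sends this boundary curve to the Prym curve $[X\cup_{y\sim q}E,\OO_X,\OO_E\text{\large(}(2g-2)\cdot (p-q)\text{\large)}]$ lying in $\Delta_1$ of $\rr_g$. It then suffices to show that a generic such curve is \emph{not} contained in the closure of $\mathcal{D}_{2i+1:2}$ in $\rr_g$.

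The key step is to translate membership in $\mathcal{D}_{2i+1:2}$ into a pointed Brill-Noether condition on $[X,y]$ that Theorem \ref{mainodd} can rule out. I would use the boundary analysis of the Hurwitz/difference-variety divisor, as developed in \cite{Bud-adm} and \cite{Bud-newdiv}, to compute the intersection of $\overline{\mathcal{D}}_{2i+1:2}$ with $\Delta_1$. The condition $\eta \in C_i - C_i$ means there exist effective divisors $D_1, D_2$ of degree $i$ on the Prym curve with $\eta \cong \OO(D_1 - D_2)$; under the degeneration to $X\cup_{y\sim q}E$, this difference-variety condition should degenerate to the existence on $X$ of a linear series whose vanishing at $y$ is forced by the torsion order of $p-q$ on $E$. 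The expected outcome, analogous to \cite[Proposition 4.6]{BudPBN}, is that membership forces $[X,y]$ to admit a $g^{r-1}_{g+r-3}$ (or a comparable linear series with $d-g-r$ small) with a prescribed vanishing sequence $a$ at $y$ whose pointed Brill-Noether number is negative and whose top vanishing $a_{r-1}$ satisfies the bound $a_{r-1} \leq g-1$ required by Theorem \ref{mainodd}.

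The main obstacle I anticipate is precisely this boundary computation: correctly identifying which limit linear series on $X$ the difference-variety condition degenerates to, and verifying the vanishing sequence obtained. Unlike the Prym-Brill-Noether case, where \cite{BudPBN} already supplies the translation to $\cM^{r-1}_{g-1,g+r-3}(a)$, here the Hurwitz nature of $\mathcal{D}_{2i+1:2}$ means I must work through the admissible-cover degeneration directly, keeping track of the ramification imposed at the node $y$ by the condition that $p-q$ has the prescribed torsion order on $E$. Once the vanishing sequence is pinned down, I would check the numerical inequalities $\rho(g-1, r-1, \cdot, a) \leq -1$ and $a_{r-1} \leq g-1$, and then Theorem \ref{mainodd} applied to the generic $[X,y] \in \mathcal{H}_{g-1}^{\textrm{odd}}(2g-4)$ immediately gives the non-existence of the required linear series. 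This contradicts the assumption that the generic boundary point lies in $\overline{\mathcal{D}}_{2i+1:2}$, and since $\mathcal{D}_{2i+1:2}$ is a divisor, containment of the image of $\pi$ would force containment of this generic boundary point; hence the image of $\pi$ is not contained in $\mathcal{D}_{2i+1:2}$.
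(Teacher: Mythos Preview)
Your overall strategy is exactly the paper's: degenerate via the same clutching $\mathcal{H}_{g-1}^{\textrm{odd}}(2g-4) \rightarrow \overline{\mathcal{Q}}_g(4g-4)$ used in Proposition~\ref{PBNtransv}, land on the Prym curve $[X\cup_{y\sim q}E,\OO_X,\OO_E((2g-2)(p-q))]$ in $\Delta_1 \subset \rr_g$, invoke the admissible-cover boundary analysis of \cite{Bud-adm} and \cite{Bud-newdiv} to translate membership in $\overline{\mathcal{D}}_{2i+1:2}$ into a pointed Brill-Noether condition on $[X,y]$, and then rule that condition out with Theorem~\ref{mainodd}.

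The one point to correct is the step you already flag as the main obstacle. Your expectation, by analogy with the Prym--Brill--Noether case, that the boundary condition will be a single higher-rank linear series of type $g^{r-1}_{g+r-3}$ with some vanishing sequence, is off. The difference-variety description $\eta \in C_i - C_i$ is fundamentally about \emph{pencils}, and the paper's boundary computation (via \cite[Proposition~4.4]{Bud-adm} and \cite[Proposition~5.4]{Bud-newdiv}) shows that the degenerate Prym curve lies in $\overline{\mathcal{D}}_{2i+1:2}$ if and only if $[X,y]$ admits a $g^1_d$ with vanishing $(0,\,2d-g+1)$ at $y$ for some $d$ in the range $i+1 \leq d \leq g-1$. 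So you get a \emph{family} of rank-one conditions rather than one higher-rank condition; each has pointed Brill-Noether number $-1$, and Theorem~\ref{mainodd} in genus $g-1$ with $r=1$ disposes of them. With this correction in place, the rest of your outline goes through verbatim.
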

\begin{proof}
    We keep the setting of the proof of Proposition \ref{PBNtransv}. We show that  $[X\cup_{y\sim q}E,\OO_X,\OO_E\text{\large(}(2g-2)\cdot (p-q)\text{\large)}]$ is not contained in $\overline{\mathcal{D}}_{2i+1:2}$. 
    
    The fact that $E$ is not general plays no role in our study. The same method as in \cite[Proposition 4.4]{Bud-adm} and \cite[Proposition 5.4]{Bud-newdiv} can be used to prove that  $[X\cup_{y\sim q}E,\OO_X,\OO_E\text{\large(}(2g-2)\cdot (p-q)\text{\large)}]$ is contained in $\overline{\mathcal{D}}_{2i+1:2}$ if and only if $[X,y]$ admits a $g^1_{d}$ with prescribed vanishing $(0,2d-g+1)$ at $y$ for some $i+1\leq d \leq g-1$. We know from Theorem \ref{mainodd} that a generic $[X,y]$ does not admit such linear series.
\end{proof}

Let $g = 2i+1$ and $\mu = (m_1,\ldots, m_n)$ be a partition of $2g-2$. Proposition \ref{FarkasLudwig} implies that for a generic element $[C,p_1,\ldots,p_n]$ of $\mathcal{Q}_g(2\cdot\mu)$ we have 
\[ h^0\textrm{\Large (}C,\wedge^iQ_{K_C}\otimes \omega_C(-\sum_{i=1}^{n}m_i\cdot p_i)\textrm{\Large)}  = 0 \]
We consider the virtual divisor
\[ \mathcal{D}_g(\mu) \coloneqq \left\{[C,p_1,\ldots,p_n] \in \cM_{g,n} \ | \ h^0\textrm{\Large (}C,\wedge^iQ_{K_C}\otimes \omega_C(-\sum_{i=1}^{n}m_i\cdot p_i)\textrm{\Large)} \geq 1 \right\}. \]
This can be alternatively described as the image of the map 
\[ \mathcal{H}_g(\mu, \underbrace{1,\ldots,1}_{i \ \textrm{times}}, \underbrace{-1,\ldots,-1}_{i \ \textrm{times}}) \rightarrow \mathcal{M}_{g,n}  \]
forgetting the last $2i$ markings, and hence it is a divisor in $\mathcal{M}_{g,n}$ because of \cite[Theorem 1]{Bud}. 

We have the following immediate consequence of Proposition \ref{FarkasLudwig}. 

\begin{cor}
Let $\mu = (m_1,\ldots, m_n)$ be a partition of $2g-2$. Then the locus $\mathcal{Q}_g(2\cdot \mu)$ is not contained in the divisor $\mathcal{D}_g(\mu)$.
\end{cor}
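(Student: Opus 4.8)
The plan is to read off the non-containment from the cohomological description of $\mathcal{D}_g(\mu)$ together with the specialization recorded after Proposition \ref{FarkasLudwig}. By definition $\mathcal{D}_g(\mu)$ is the locus in $\mathcal{M}_{g,n}$ where $h^0(C, \wedge^i Q_{K_C} \otimes \omega_C(-\sum_{j=1}^n m_j p_j)) \geq 1$, which is an upper-semicontinuous, hence closed, condition. To prove $\mathcal{Q}_g(2\cdot\mu) \not\subseteq \mathcal{D}_g(\mu)$ it therefore suffices to produce a single point of $\mathcal{Q}_g(2\cdot\mu)$ (or of its closure, since the complement of $\overline{\mathcal{D}}_g(\mu)$ is open) on which this $h^0$ vanishes. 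First I would observe that on $\mathcal{Q}_g(2\cdot\mu)$ the bundle $\eta := \omega_C(-\sum_j m_j p_j)$ is $2$-torsion, because $\mathcal{O}_C(\sum_j 2m_j p_j) \cong \omega_C^2$; thus $[C,p_1,\ldots,p_n]\mapsto [C,\eta]$ defines a morphism $\tilde{\pi}\colon \mathcal{Q}_g(2\cdot\mu)\to \mathcal{R}_g$. Since $h^0(C,\wedge^i Q_{K_C}\otimes\eta)$ depends only on the Prym datum $[C,\eta]$ and is $\geq 1$ exactly when $[C,\eta]\in \mathcal{D}_{2i+1:2}$, the containment $\mathcal{Q}_g(2\cdot\mu)\subseteq \mathcal{D}_g(\mu)$ is equivalent to $\tilde{\pi}(\mathcal{Q}_g(2\cdot\mu))\subseteq \mathcal{D}_{2i+1:2}$, and it is the latter that I would disprove.

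To disprove it I would specialize along the clutching map $\mathcal{Q}_g(4g-4)\to \overline{\mathcal{Q}}_g(2\cdot\mu)$ sending $[C,p]$ to $[C\cup_p\mathbb{P}^1,p_1,\ldots,p_n]$ with the points $p_j$ placed on the rational tail, exactly as in Section 2. Extending $\tilde{\pi}$ to the boundary, such a compact-type curve carries the Prym datum $[C,\omega_C(-(2g-2)p)]$ on its genus-$g$ component, the rational tail contributing only the trivial part, so the relevant Prym-canonical cohomology is governed by $h^0(C,\wedge^i Q_{K_C}\otimes\omega_C(-(2g-2)p))$. Proposition \ref{FarkasLudwig} states precisely that for $[C,p]$ generic in $\mathcal{Q}_g(4g-4)$ this group vanishes, i.e. $[C,\omega_C(-(2g-2)p)]\notin \overline{\mathcal{D}}_{2i+1:2}$. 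Hence the corresponding boundary point of $\overline{\mathcal{Q}}_g(2\cdot\mu)$ lies outside the closed locus $\overline{\mathcal{D}}_g(\mu)$; by openness of the complement it has interior neighbors in $\mathcal{Q}_g(2\cdot\mu)$ with the same vanishing, so a generic element of $\mathcal{Q}_g(2\cdot\mu)$ satisfies $h^0 = 0$, giving the non-containment.

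The main obstacle is the bookkeeping of the Prym-canonical bundle $\wedge^i Q_{K_C}$ across this degeneration: one must check that on the compact-type curve $C\cup_p\mathbb{P}^1$ the limiting space of sections is controlled by the single component $(C,\omega_C(-(2g-2)p))$, so that the vanishing furnished by Proposition \ref{FarkasLudwig} genuinely certifies that the boundary point avoids $\overline{\mathcal{D}}_g(\mu)$. Once the degeneration is arranged as in the proofs of Proposition \ref{PBNtransv} and Proposition \ref{FarkasLudwig}, this is handled by the same admissible-cover and limit-linear-series analysis already developed there, after which the semicontinuity step is routine.
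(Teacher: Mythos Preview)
Your proposal is correct and follows the same route the paper has in mind: reformulate membership in $\mathcal{D}_g(\mu)$ on $\mathcal{Q}_g(2\mu)$ as the condition $\tilde{\pi}([C,p_1,\ldots,p_n])\in\mathcal{D}_{2i+1:2}$, specialize along the rational-tail clutching $\mathcal{Q}_g(4g-4)\to\overline{\mathcal{Q}}_g(2\mu)$, observe that the extended $\tilde{\pi}$ sends this boundary point to $[C,\omega_C(-(2g-2)p)]\in\mathcal{R}_g$, and invoke Proposition~\ref{FarkasLudwig}. The paper records exactly this chain in the sentence preceding the corollary and then states the corollary as immediate; you have simply spelled out the specialization step.

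One remark on your final paragraph: the worry about degenerating $\wedge^i Q_{K_C}$ is unnecessary. Once you have rewritten the problem as $\tilde{\pi}(\mathcal{Q}_g(2\mu))\subseteq\mathcal{D}_{2i+1:2}$, the only degeneration you need is that of the $2$-torsion bundle $\eta$, not of the Prym-canonical bundle. Since the stable model of $C\cup_p\mathbb{P}^1$ is $C$ and the limit of $\omega_{C_t}(-\sum m_j p_j(t))$ on the genus-$g$ component (after the standard twist by the rational tail) is $\omega_C(-(2g-2)p)$, the extended map lands in $\mathcal{R}_g$ and coincides with the $\pi$ of Proposition~\ref{FarkasLudwig}. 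No admissible-cover or limit-linear-series analysis beyond what is already inside that proposition is required here; the continuity of $\tilde{\pi}$ and closedness of $\overline{\mathcal{D}}_{2i+1:2}$ suffice.
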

\bibliography{main}
\bibliographystyle{alpha}
\Addresses
\end{document}